\setlist[enumerate,1]{wide, labelindent=0pt,label={\upshape(\roman*)}}
\numberwithin{equation}{section}
\newcommand{\beq}{\begin{equation}}
\newcommand{\eeq}{\end{equation}}
\newcommand{\bea}{\begin{eqnarray}}
\newcommand{\eea}{\end{eqnarray}}
\newcommand{\beas}{\begin{eqnarray*}}
\newcommand{\eeas}{\end{eqnarray*}}
\newtheorem{theorem}{Theorem}[section]
\newtheorem{proposition}[theorem]{Proposition}
\newtheorem{corollary}[theorem]{Corollary}
\newtheorem{lemma}[theorem]{Lemma}
\newtheorem{remark}[theorem]{Remark}
\newtheorem{example}[theorem]{Example}
\newtheorem{examples}[theorem]{Examples}
\newtheorem{foo}[theorem]{Remarks}
\newcommand{\ang}[1]{\left<#1\right>}  
\newcommand{\bM}{\mathbb M}
\newcommand{\Riem}{\mathrm{Riem}}
\newcommand{\Ho}{\mathcal H}
\newcommand{\V}{\mathcal V}
\newcommand{\hor}{\mathcal H}
\newcommand{\ver}{\mathcal V}
\newcommand{\M}{\mathbb M}
\newcommand{\Tor}{\mathrm{Tor}}
\newcommand{\be}{\beta}
\newcommand{\ch}{\mathcal H}
\newcommand{\Dh}{{\Delta}_{\mathcal{H}}}
\title{Sub-Laplacian comparison theorems on  Riemannian foliations with minimal leaves and applications}
\author{Fabrice Baudoin
}
\affil{Department of Mathematics, Aarhus University\par
   \texttt{fbaudoin@math.au.dk}\vspace{1em}}
\begin{document}

\maketitle

\newcommand\blfootnote[1]{%
  \begingroup
  \renewcommand\thefootnote{}\footnote{#1}%
  \addtocounter{footnote}{-1}%
  \endgroup
}
\blfootnote{Research supported in part by a Villum Investigator grant and the Advanced Grant \emph{RanGe} from the European Research Council.}

\begin{abstract}
We prove comparison theorems for the horizontal Laplacian of the Riemannian distance in the context of Riemannian foliations with minimal leaves. This general framework generalizes  previous works and allow us to consider the sub-Laplacian of Carnot groups of arbitrary steps. The comparison theorems yield a Bonnet-Myers type theorem, stochastic completeness and Lipschitz regularization property for the sub-Riemannian semigroup.
\end{abstract}

\

\tableofcontents

\section{Introduction}

Comparison theorems for the Laplacian of the distance function are among the
central tools of Riemannian geometry. They provide a direct link between lower
bounds on curvature and global geometric or analytic consequences, such as
diameter bounds, compactness criteria, or functional inequalities. A classical
example is the Bonnet--Myers theorem, which follows from Laplacian comparison
under a positive lower Ricci curvature bound. 

In this work, we extend this circle of ideas to the setting of Riemannian
foliations with minimal leaves. More precisely, we prove comparison theorems
for the upper bound of the \emph{horizontal Laplacian} of the Riemannian
distance function. On such foliations, the horizontal Laplacian is a subelliptic operator that can be
characterized as the partial trace of the Hessian in horizontal directions for
the Levi-Civita connection. Under the classical assumption of a lower bound on
sectional curvature, horizontal Laplacian comparison follows rather directly
from the Hessian comparison theorem. Our goal, however, is to go beyond this
observation: by exploiting the geometry of the foliation itself, we establish
comparison theorems under the assumption of a lower bound on a
\emph{Ricci-like tensor} $\mathfrak R$ that naturally arises from the horizontal distribution of the foliation. Since
the Levi-Civita connection does not, in general, preserve horizontality, the
tensor $\mathfrak R$ is not simply the horizontal trace of the Riemann curvature tensor,
but instead incorporates  curvature and torsion contributions from the geometry of the
horizontal distribution. 

Our approach therefore generalizes, and significantly extends, the work of
\cite{radial} and \cite{BauGr}, where the case of totally geodesic foliations was considered. In particular,
our framework applies to all Carnot groups of arbitrary step.

\subsection*{Main results}

The main results of this paper may be summarized as follows:

\begin{itemize}
    \item \textbf{Horizontal Laplacian comparison theorems.}
    We establish Riemannian model upper bounds for the horizontal Laplacian of the
    Riemannian distance function under the curvature assumption
    \[
        \mathfrak R(U,U) \geq K \, |U|^2, \qquad U \in TM,
    \]
    where $\mathfrak R$ is the horizontal Ricci-type tensor associated with the foliation. The
    resulting inequalities take the form of  non-sharp but useful comparison formulas analogous
    to their Riemannian counterparts, involving the cotangent, linear, or
    hyperbolic cotangent functions depending on the sign of $K$.

    \item \textbf{A Bonnet--Myers type theorem.}
    As a direct application of the comparison results, we prove that if
    $\mathfrak R \geq K > 0$, then the ambient manifold is compact, with an explicit
    upper bound on its Riemannian diameter of the form
    \[
        \mathrm{diam}(M) \leq \pi \sqrt{\frac{n}{K}},
    \]
   obtaining a Bonnet--Myers theorem in the sub-Riemannian
    framework of foliations with minimal leaves.

    \item \textbf{Stochastic completeness and heat kernel estimates.}
    We study the horizontal Brownian motion associated with the generator
    $\Delta_H$. Using the radial decomposition and comparison arguments, we
    show that the corresponding heat semigroup is stochastically complete.
    Moreover, we obtain  exit time estimates which in turn yield
    on-diagonal lower bounds for the heat kernel.

    \item \textbf{Lipschitz regularization and coupling techniques.}
    By developing couplings of horizontal Brownian motions via parallel
    transport, we prove that the horizontal heat semigroup enjoys Lipschitz
    regularization properties: for any bounded Lipschitz function $f$,
    \[
        \mathrm{Lip}(P_t f) \leq e^{-Kt} \, \mathrm{Lip}(f).
    \]
    This type of estimate is of independent interest in analysis and
    probability, as it plays a key role in the theory of generalized curvature
    dimension inequalities in \cite{BG170}. In the case of totally geodesic foliations, our
    method yields even sharper \emph{gradient bounds} of the form
    \[
        |\nabla P_t f| \leq e^{-Kt} \, P_t |\nabla f|,
    \]
    thereby recovering and extending known results from \cite{MR3601645}.

    \item \textbf{Applications to Carnot groups.}
    Since any Carnot group fits into our framework as a Riemannian foliation
    with minimal leaves, our results provide a new and systematic approach to
    sub-Laplacian comparison theorems for the Riemannian distance in Carnot groups of arbitrary step. In
    step two Carnot groups, such as the Heisenberg group, we also recover known
    mixed horizontal/vertical gradient bounds.
\end{itemize}

\subsection*{Outlook}

The comparison theorems established in this paper open the way to a broader
theory of curvature-dimension and functional inequalities in sub-Riemannian
geometries, beyond the cases treated in \cite{BBG14,BG170}. In the setting
of totally geodesic Riemannian foliations, a quick computation shows that the family of
tensors $\mathfrak{R}_\varepsilon$ arising from the canonical variation of the
metric give the tensors governing the generalized curvature-dimension
inequality of \cite{BBG14}. Our results here suggest that, in the more general
framework of foliations with minimal leaves, an analogous theory of
curvature-dimension inequalities should emerge, again based on the
one-parameter family of tensors $\mathfrak{R}_\varepsilon$. Developing this
theory is a natural continuation of the present work and will be addressed in
future research.

\medskip

Finally, it is worth noting that the comparison theorems proved here are not yet
fully sub-Riemannian: they provide estimates for the horizontal Laplacian of
the \emph{Riemannian} distance, rather than the sub-Riemannian distance.
Obtaining analogous bounds for the sub-Riemannian distance is substantially more
challenging and typically requires much stronger structural assumptions, see
\cite{baudoin2019comparison}. Nevertheless, such sub-Riemannian comparison
theorems have stronger implications, for instance leading to the measure
contraction property, which cannot be deduced from horizontal Laplacian
estimates of the Riemannian distance.

\

\textbf{Notations:}

\begin{itemize}
\item If $M$ is a manifold, $TM$ is the tangent bundle.
\item $\mathcal{L}$ is the Lie derivative
 \item If $\mathcal{W}$ is a vector bundle over $M$, $\mathfrak{X}(\mathcal{W})$ is the set of smooth sections of that bundle.
 \item If $g$ is a Riemannian metric we denote $\left\langle u, v \right\rangle=g(u,v)$, $|u|^2=g(u,u)$.
\end{itemize}

\section{Horizontal Laplacian comparison theorem} \label{sec:RFoliation}

\subsection{Framework and standing assumptions} \label{ssec:foliation}

Throughout the paper, we consider a smooth connected $n+m$ dimensional manifold $M$ which is equipped with a Riemannian foliation $\mathcal{F}$ with a bundle like metric $g$ and $m$ dimensional leaves.  We  assume that the metric $g$ is complete and that the leaves are closed.

\

For $x \in M$, $\mathcal{F}_x$ denotes the leaf going through $x$. The sub-bundle $\mathcal{V}$ of the tangent bundle $TM$ formed by vectors tangent to the leaves is referred  to as the set of \emph{vertical directions}. The sub-bundle $\mathcal{H}$ which is normal to $\mathcal{V}$ is referred to as the set of \emph{horizontal directions}.  Any vector $u \in T_xM$ can  be decomposed as 
\[
u=u_\mathcal{H} +u_\mathcal{V}
\]
where $u_\mathcal{H}$ (resp. $u_\mathcal{V}$) denotes the orthogonal projection of $u$ on $\mathcal{H}_x$  (resp. $\mathcal{V}_x$).  Throughout the paper we assume that the horizontal sub-bundle $\Ho$ is bracket generating. Recall that saying that the metric is bundle like means that:
\begin{align}\label{bundle like condition}
 (\mathcal{L}_U g)(X, X)=0, \qquad \text{for any $X \in \mathfrak{X}(\Ho)$, $U \in \mathfrak{X}(\V)$.}
 \end{align}
Geometrically,  \eqref{bundle like condition} means that the foliation can locally be described by Riemannian submersions. We refer for instance  to the classical reference  \cite{Tondeur} or the more recent monograph \cite{Gromoll} for an overview of the theory of Riemannian foliations.

In this paper we will be interested in foliations with minimal leaves, i.e. we will assume all the leaves to be minimal submanifolds of $M$. Recall that a submanifold is called minimal if its mean curvature is zero. In terms of the Levi-Civita connection $D$ saying that the leaves are minimal is equivalent to 
\[
\sum_{i=1}^m D_{Z_i}Z_i \in \mathfrak{X}(\V)
\]
where the $Z_i$'s form an arbitrary (local) orthonormal frame of $\V$. There are many examples of such foliations. For instance, a large class of such foliations arise from the situation where the leaves are totally geodesic which corresponds to the case where $U,V \in \mathfrak{X}(\V)$ implies $D_UV \in \mathfrak{X}(\V)$. For rank one foliations (i.e. $m=1$), minimality and total geodesicity are the same, but of course not all minimal foliations are totally geodesic. For instance, consider the $2n+1$-dimensional sphere $\mathbf{S}^{2n+1}$ seen as a subspace of $\mathbb{C}^{n+1}$ and the manifold
\[
M=\mathbf{S}^{2n+1} \setminus \left\{ (z_1,\cdots,z_{n+1}) \in \mathbf{S}^{2n+1} : \exists i, z_i=0  \right\}
\]
The group $\mathbf{U}(1)^{n+1}$ acts on $M$ by:
\[
(e^{i\theta_1},\cdots, e^{i\theta_{n+1}})\cdot (z_1,\cdots,z_{n+1})=(e^{i\theta_1} z_1,\cdots,e^{i\theta_{n+1}} z_{n+1}).
\]
The orbits of that action yield a Riemannian foliation on $M$ whose leaves are tori. From O'Neilll's equation (see \cite[Corollary 1.5.1]{Gromoll}) the foliation can not be totally geodesic because the leaves are flat whereas the ambient space $M$ has positive constant sectional curvature. However, it is easily seen by symmetry that the orbits are minimal submanifolds. 

The main example of foliations with minimal leaves we consider throughout the paper is the case of Carnot groups.  A Carnot group is a connected, simply connected nilpotent Lie group $G$ whose Lie algebra  $\mathfrak{g}$ admits a stratification
\begin{align}\label{stratification}
\mathfrak{g} = V_{1} \oplus V_{2} \oplus \cdots \oplus V_{s},
\end{align}
with the properties:
\begin{enumerate}
    \item $[V_{1}, V_{j}] = V_{j+1}$ for every $1 \leq j < s$,
    \item $[V_{1}, V_{s}] = \{0\}$.
\end{enumerate}
The integer $s$ is called the step of the Carnot group, and $V_{1}$ is referred to as the horizontal layer (or first layer). Consider on $\mathfrak{g}$ an arbitrary inner product that makes the decomposition \eqref{stratification} orthogonal, i.e. for $i \neq j$, $V_i \perp V_j$. This inner product uniquely defines a left-invariant Riemannian metric on $G$. We can orthogonally decompose the tangent bundle $TG$ as
\[
TG=\mathcal{H}\oplus \mathcal{V}
\]
where $\Ho$ is the left invariant sub-bundle which gives $V_1$ at the identity and $\V$ is the left invariant sub-bundle which gives $\oplus_{i \ge 2} V_i$ at the identity. Since $[\mathcal{V},\mathcal{V}] \subset \mathcal{V}$, $\V$ is the vertical bundle of a foliation $\mathcal{F}$ on $G$. Note that $\Ho$ is bracket generating.

\begin{proposition}\label{Example Carnot}
Let $G$ be a Carnot group equipped with the foliation $\mathcal{F}$ and the metric $g$. Then, $\mathcal F$ is a Riemannian foliation with bundle-like metric $g$ and the leaves of the foliations are minimal submanifolds of $G$. 
\end{proposition}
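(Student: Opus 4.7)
The plan is to verify both assertions by reducing to left-invariant frames on $G$ and exploiting the stratification relations $[V_1,V_j]\subset V_{j+1}$. Fix a left-invariant orthonormal frame $X_1,\dots,X_n$ of $\mathcal H$ (adapted to $V_1$) and $Z_1,\dots,Z_m$ of $\mathcal V$ (adapted to $V_2\oplus\cdots\oplus V_s$, so each $Z_a$ lies in a single stratum $V_{k(a)}$, $k(a)\ge 2$). The crucial observation is that for any vertical left-invariant $Z_a\in V_{k(a)}$ and horizontal left-invariant $X_i\in V_1$, the bracket $[Z_a,X_i]$ belongs to $V_{k(a)+1}\subset\mathcal V$ (where $V_{s+1}=\{0\}$), so in particular it is vertical.

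For the bundle-like property, I would first note that $\mathcal L_U g$ is a genuine $(0,2)$-tensor, so $(\mathcal L_U g)(X,Y)$ is $C^\infty$-linear in $X$ and $Y$. A short computation using $[fU,X]=f[U,X]-(Xf)U$ and the fact that $g(U,X)=g(U,Y)=0$ when $U\in\mathfrak X(\mathcal V)$ and $X,Y\in\mathfrak X(\mathcal H)$ shows that, restricted to horizontal arguments, $(\mathcal L_{fU}g)(X,Y)=f(\mathcal L_U g)(X,Y)$. Hence it suffices to verify \eqref{bundle like condition} when $U,X,Y$ are all left-invariant. In that case $g(X,Y)$ is constant, so
\[
(\mathcal L_U g)(X,Y)=-g([U,X],Y)-g(X,[U,Y]),
\]
and by the observation above both brackets are vertical, hence orthogonal to $X$ and $Y$. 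This gives $(\mathcal L_U g)(X,Y)=0$ and thus \eqref{bundle like condition}.

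For minimality, I would apply the Koszul formula to the left-invariant orthonormal vertical frame $(Z_a)$ paired against an arbitrary left-invariant horizontal $X\in V_1$. Since inner products of left-invariant fields are constants, Koszul's formula simplifies to
\[
2g(D_{Z_a}Z_a,X)=-2g([Z_a,X],Z_a),
\]
so
\[
\sum_{a=1}^m g\bigl(D_{Z_a}Z_a,X\bigr)=-\sum_{a=1}^m g\bigl([Z_a,X],Z_a\bigr).
\]
By the stratification, $[Z_a,X]\in V_{k(a)+1}$ while $Z_a\in V_{k(a)}$, and distinct strata are orthogonal. Therefore every term on the right vanishes, giving $\sum_a D_{Z_a}Z_a\perp\mathcal H$. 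Since the expression $g(D_{Z_a}Z_a,X)$ is $C^\infty$-linear in $X$ it suffices to test on the left-invariant horizontal frame, so $\sum_a D_{Z_a}Z_a\in\mathfrak X(\mathcal V)$, which is precisely the minimality condition.

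The argument has no real obstacle; the single mildly delicate point is to justify that the two properties are local/tensorial enough to be checked on left-invariant frames. For the bundle-like condition this is the $C^\infty$-linearity in $U$ noted above (which requires the orthogonality $g(\mathcal H,\mathcal V)=0$); for minimality it is the tensoriality of $g(D_{Z_a}Z_a,\cdot)$ in its second argument. Once these reductions are in place, everything follows from the stratification relations $[V_1,V_j]\subset V_{j+1}$.
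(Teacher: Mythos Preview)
Your proof is correct and follows essentially the same approach as the paper: both verify the bundle-like condition via $(\mathcal L_U g)(X,X)=-2\langle[U,X],X\rangle=0$ using that $[U,X]$ is vertical, and both verify minimality via Koszul's formula $\langle D_{Z_a}Z_a,X\rangle=-\langle[Z_a,X],Z_a\rangle=0$ using that $[Z_a,X]\in V_{k(a)+1}\perp V_{k(a)}$. The only difference is that you spell out the tensoriality argument justifying the reduction to left-invariant frames, which the paper leaves implicit.
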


\begin{proof}
 For $X \in \mathfrak{X}(\Ho)$ and $U \in \mathfrak{X}(\V)$ which are left-invariant one has
\[
(\mathcal{L}_U g)(X, X)=-2 \left\langle [U,X], X \right\rangle =0
\]
because at the identity one has $X \in V_1$ and $[U,X] \in \oplus_{i \ge 2} V_i$. Therefore, the metric $g$ is bundle-like and the foliation $\mathcal F$ Riemannian. Then, consider a left-invariant vertical orthonormal frame $Z_i$ adapted to the grading in the sense that for every $i$ there exists $n(i)$ such that $Z_i \in V_{n(i)}$ at  identity. From Koszul's formula, for any left invariant horizontal vector $X$  one has
\[
\left\langle D_{Z_i} Z_i ,X \right\rangle=-\left\langle [Z_i,X],Z_i \right\rangle=0
\]
because at identity $[Z_i,X] \in V_{n(i)+1}$. Therefore $\sum_{i=1}^n D_{Z_i} Z_i \in \mathfrak{X}(\V)$.
\end{proof}

Note that it follows from similar arguments that the foliation on Carnot group is totally geodesic if and only if the group has step 2.

\subsection{Horizontal Laplacian and fundamental connection}

We  come back to the general framework described in the previous section. The Riemannian gradient will be denoted $\nabla$ and the horizontal gradient $\nabla_\Ho$: it is simply defined as the projection of $\nabla$ onto $\mathcal{H}$. The horizontal Laplacian $\Delta_\Ho$ is the generator of the symmetric closable bilinear form:
\[
\mathcal{E}_{\mathcal{H}} (f,g) 
=-\int_\bM \langle \nabla_\mathcal{H} f , \nabla_\mathcal{H} g \rangle_{\mathcal{H}} \,d\mu, 
\quad f,g \in C_0^\infty(M),
\]
where $\mu$ denotes the Riemannian volume measure on $M$ and  $C_0^\infty(M)$ the space of smooth and compactly supported functions on $M$.

The hypothesis that $\mathcal{H}$ is bracket generating implies that the horizontal Laplacian $\Delta_{\mathcal{H}}$ is locally subelliptic and the completeness assumption on $g$ implies that $\Delta_{\mathcal{H}}$ is furthermore essentially self-adjoint on $C_0^\infty(M)$ (see for instance~\cite{BaudoinEMS2014}).

If $X_1,\cdots,X_n$ is a local orthonormal frame of horizontal vector fields, then we locally have
\begin{align}\label{def: horizontal laplace}
\Delta_{\mathcal{H}}=\sum_{i=1}^n X_i^2 -\sum_{i=1}^n (D_{X_i}X_i)_{\mathcal{H}}.
\end{align}

The Levi-Civita connection $D$ associated with the Riemannian metric $g$ is in general not well adapted to study foliations since the horizontal and vertical bundle might not be $D$-parallel.  There is a more natural connection $\nabla$   that respects the foliation structure, see    \cite{Hladky}, \cite{baudoin2019comparison} and \cite{AGAG}.  

Define a $(2,1)$ tensor $ C$ by the formula: For $X,Y,Z \in \mathfrak{X}(TM)$
\begin{equation}
\left\langle C_X Y, Z\right\rangle = \frac{1}{2}(\mathcal{L}_{X_\hor}g)(Y_\ver,Z_\ver).
\end{equation}
The connection $\nabla$ is then defined in terms of the Levi-Civita one $D$ by
\[
\nabla_X Y =
\begin{cases}
 ( D_X Y)_{\mathcal{H}} , &X,Y \in \mathfrak{X}(\mathcal{H}), \\
  [X,Y]_{\mathcal{H}},  &X \in \mathfrak{X}(\mathcal{V}),\ Y \in \mathfrak{X}(\mathcal{H}), \\
  [X,Y]_{\mathcal{V}}+C_X Y,  &X \in \mathfrak{X}(\mathcal{H}),\ Y \in \mathfrak{X}(\mathcal{V}), \\
 (D_X Y)_{\mathcal{V}} , &X,Y \in \mathfrak{X}(\mathcal{V}).
\end{cases}
\]
It satisfies the following characteristic (and defining) properties:
\begin{enumerate}
\item $\Ho$ and $\V$ are $\nabla$-parallel: For every $X \in \mathfrak{X}(\mathcal{H})$, $Z \in \mathfrak{X}(TM)$ and $U \in \mathfrak{X}(\mathcal{V})$,
\[
\nabla_Z X \in \mathfrak{X}(\mathcal{H}), \quad \nabla_Z U \in \mathfrak{X}(\mathcal{V});
\]
\item $\nabla$ is a metric connection: $\nabla g=0$;
\item The torsion $\Tor^\nabla$ of $\nabla$ satisfies $\Tor^\nabla(\mathcal{H},\mathcal{H}) \subset \mathcal{V}$, $\Tor^\nabla(\mathcal{H},\mathcal{V}) \subset \mathcal{V}$ and $\Tor^\nabla(\mathcal{V},\mathcal{V}) =0$.
\item For every $X \in \mathfrak{X}(\mathcal{H})$, $V,Z \in \mathfrak{X}(\mathcal{V})$, 
\begin{equation}\label{symmetry torsion}
 \langle \Tor^\nabla(Z,X), V \rangle =\langle \Tor^\nabla(V,X), Z \rangle.
\end{equation}

\end{enumerate}

\begin{remark}\label{connection project}
Assume that the foliation $\mathcal{F}$ arises from a Riemannian submersion $\pi :(M,g) \to (B,h)$. Let $X,Y \in \mathfrak{X}(M)$ be basic vector fields on $M$, then $\nabla_X Y$ is also basic. In fact, from \cite[Lemma 1.4.1]{Gromoll}, if $\tilde{X},\tilde{Y}$ are the vector fields on $B$ that are $\pi$-related to $X,Y$, then $\nabla_X Y$ is $\pi$-related to $D^B_{\tilde X} \tilde {Y}$, i.e.
\[
\pi_* \left(\nabla_X Y \right) =(D^B_{\tilde X} \tilde {Y}) \circ \pi
\]
where $D^B$ is the Levi-Civita connection on $B$. 
\end{remark}

A short computation shows that the torsion tensor  $\Tor^\nabla$   is  given by
\begin{equation*}
\Tor^\nabla(X,Y) = \begin{cases}
- [X,Y]_\mathcal{V} & X,Y \in \mathfrak{X}(\hor), \\
C_X Y  &  X \in \mathfrak{X}(\hor), Y \in \mathfrak{X}(\V), \\
0 & X,Y \in \mathfrak{X}(\ver).
\end{cases}
\end{equation*}

\begin{remark}
Recall that the foliation is totally geodesic if and only if
\begin{align}\label{total geodesic condition}
 (\mathcal{L}_X g)(U, U)=0, \qquad \text{for any $X \in \mathfrak{X}(\Ho)$, $U \in \mathfrak{X}(\V)$.}
 \end{align}
This is equivalent to $C=0$. Therefore, it follows from the torsion formula that the foliation is totally geodesic if and only if $\Tor^\nabla(\Ho,\V)=0$.
\end{remark}

\begin{example}[Carnot groups]
Consider the foliation on a Carnot group as in Proposition \ref{Example Carnot}. For a left invariant vector field $X$, denote $\mathrm{ad}_X$ the map $\mathrm{ad}_X (Y)=[X,Y]$ and $\mathrm{ad}^*$ its adjoint. It follows from Koszul's formula that for left invariant vector fields
\[
\nabla_X Y =
\begin{cases}
0 &X,Y \in \mathfrak{X}(\mathcal{H}), \\
  0  &X \in \mathfrak{X}(\mathcal{V}),\ Y \in \mathfrak{X}(\mathcal{H}), \\
 \frac{1}{2} \mathrm{ad}_X Y-\frac{1}{2} (\mathrm{ad}^*_X Y)_\V   &X \in \mathfrak{X}(\mathcal{H}),\ Y \in \mathfrak{X}(\mathcal{V}), \\
 \frac{1}{2} \mathrm{ad}_X Y-\frac{1}{2} (\mathrm{ad}^*_Y X)_\V-\frac{1}{2} (\mathrm{ad}^*_X Y)_\V &X,Y \in \mathfrak{X}(\mathcal{V}).
\end{cases}
\]
and therefore, 
\begin{equation*}
\Tor^\nabla(X,Y) = \begin{cases}
- \mathrm{ad}_X Y& X,Y \in \mathfrak{X}(\hor), \\
 -\frac{1}{2} \mathrm{ad}_X Y-\frac{1}{2} (\mathrm{ad}^*_X Y)_\V  &  X \in \mathfrak{X}(\hor), Y \in \mathfrak{X}(\V), \\
0 & X,Y \in \mathfrak{X}(\ver).
\end{cases}
\end{equation*}
\end{example}

For $Z \in \mathfrak{X}(TM)$, there is a  unique skew-symmetric endomorphism  $J_Z:T_xM \to T_xM$ such that for all vector fields $X$ and $Y$,
\begin{align}\label{Jmap}
\left\langle J_Z X,Y\right\rangle= \left\langle Z,\Tor^\nabla (X,Y) \right\rangle.
\end{align}
From the definition and since $\Tor^\nabla$ is vertical, we deduce that  if $Z \in \mathfrak{X}(TM)$, then $J_Z=0$.

The relation between the Levi-Civita connection $D$  and the connection $\nabla$ is given by the formula

\begin{equation}\label{Levi-Civita}
\nabla_X Y=D_X Y+\frac{1}{2} \Tor^\nabla (X,Y)-\frac{1}{2} J_XY-\frac{1}{2} J_Y X, \quad X,Y \in \mathfrak{X}(TM).
\end{equation}

It is a trivial consequence of \eqref{Levi-Civita} that the Riemannian geodesic equation $D_{\dot\gamma} \dot\gamma=0$ becomes
\begin{align}\label{geodesic1}
\nabla_{\dot\gamma} \dot\gamma+ J_{\dot\gamma} \dot\gamma =0.
\end{align}

Thanks to \eqref{def: horizontal laplace}, in a local horizontal frame $X_i$ the horizontal Laplacian $\Dh$ is given by
\begin{align*}
\Dh&=\sum_{i=1}^n X_i^2 -\sum_{i=1}^n (D_{X_i}X_i)_{\mathcal{H}} \\
 &=\sum_{i=1}^n X_i^2 -\sum_{i=1}^n \nabla_{X_i}X_i \\
 &=\sum_{i=1}^n \mathrm{Hess}^\nabla f (X_i,X_i) \\
 &=\mathrm{Tr}_\Ho( \mathrm{Hess}^\nabla f),
\end{align*}
where $\mathrm{Hess}^\nabla f$ denotes the $\nabla$-Hessian of $f$ and $\mathrm{Tr}_\Ho$ the horizontal trace, i.e. the partial trace taken on the horizontal space. Note that from \eqref{Levi-Civita} we also have
\begin{align*}
\Dh&=\sum_{i=1}^n X_i^2 -\sum_{i=1}^n D_{X_i}X_i \\
 &=\mathrm{Tr}_\Ho( \mathrm{Hess}^D f).
\end{align*}
We point out that those formulas for the horizontal Laplacian crucially rely on the assumption of the leaves being minimal.  Without this assumption the formula for $\Dh$ also involves the mean curvature curvature of the leaves, see \cite{AGAG}.

\subsection{The index form in horizontal directions}

Recall that in Riemannian geometry the second variation of the length functional is given by the index form.  The index form of a vector field $X$  along a unit speed geodesic $\gamma : [0,r] \to M $ is defined as 
\begin{align}\label{def:index}
I (\gamma,X):   =\int_0^r \left | D_{\dot \gamma}  X \right|^2 -\left \langle \mathrm{Riem}^D(\dot \gamma,X)X, \dot \gamma \right \rangle  dt
\end{align}
where $\Riem^D$ denotes the Riemann curvature tensor of  the Levi-Civita connection which is defined for $X,Y,Z \in \mathfrak{X} (TM)$ as
\begin{align}\label{riem curv}
\Riem^D(X,Y)Z=D_XD_Y Z -D_Y D_X Z -D_{[X,Y]} Z.
\end{align}

Our  goal is to give a formula for the index form on horizontal fields. As already pointed out the Levi-Civita connection does not preserve horizontality, so a first step is to compute $\Riem^D$ in terms of $\Riem^\nabla$, the Riemann curvature tensor of $\nabla$.

\begin{lemma}\label{Riemann in terms of Hlad}
Let $A$ be the $(2,1)$ tensor defined by
\[
A_X Y=-\frac{1}{2} \Tor^\nabla (X,Y)+\frac{1}{2} J_XY+\frac{1}{2} J_Y X
\]
The curvature operator of the Levi-Civita can be written as 
\begin{align*}
\Riem^{D}(X,Y)Z =&\Riem^{\nabla}(X,Y)Z
+(\nabla_XA)_Y Z-(\nabla_YA)_X Z \\
& + A_{\Tor^{\nabla}(X,Y)} Z
+ A_X A_Y Z - A_Y A_X Z,
\end{align*}
for all vector fields $X,Y,Z \in \mathfrak{X}(TM)$.
\end{lemma}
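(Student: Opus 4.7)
The plan is a direct substitution: equation \eqref{Levi-Civita} can be rewritten as $D_X Y = \nabla_X Y + A_X Y$ where $A$ is the tensor in the statement, and I would simply plug this into the definition \eqref{riem curv} of $\Riem^D$ and collect terms, taking care of the fact that $\nabla$ has torsion.

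First, I would compute the two second-order terms. Since $A$ is a $(2,1)$-tensor, the Leibniz rule gives
\[
\nabla_X(A_Y Z) = (\nabla_X A)_Y Z + A_{\nabla_X Y} Z + A_Y \nabla_X Z,
\]
so expanding $D_X D_Y Z = \nabla_X(\nabla_Y Z + A_Y Z) + A_X(\nabla_Y Z + A_Y Z)$ yields
\[
D_X D_Y Z = \nabla_X\nabla_Y Z + (\nabla_X A)_Y Z + A_{\nabla_X Y} Z + A_Y\nabla_X Z + A_X \nabla_Y Z + A_X A_Y Z,
\]
and a symmetric expression for $D_Y D_X Z$. Subtracting, the mixed terms $A_Y\nabla_X Z + A_X\nabla_Y Z$ cancel.

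Second, I would treat the bracket term using the torsion formula $[X,Y] = \nabla_X Y - \nabla_Y X - \Tor^\nabla(X,Y)$:
\[
D_{[X,Y]} Z = \nabla_{\nabla_X Y} Z - \nabla_{\nabla_Y X} Z - \nabla_{\Tor^\nabla(X,Y)} Z + A_{\nabla_X Y} Z - A_{\nabla_Y X} Z - A_{\Tor^\nabla(X,Y)} Z.
\]
Here the terms $A_{\nabla_X Y} Z - A_{\nabla_Y X} Z$ cancel exactly against the analogous terms from the second derivatives computed above. What remains on the $\nabla$-side is precisely the expression for $\Riem^\nabla(X,Y) Z$, using once again the identity
\[
\nabla_{[X,Y]} Z = \nabla_{\nabla_X Y} Z - \nabla_{\nabla_Y X} Z - \nabla_{\Tor^\nabla(X,Y)} Z,
\]
so the leftover $\nabla_{\Tor^\nabla(X,Y)} Z$ is absorbed into $\Riem^\nabla$. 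After these cancellations the only surviving terms are $(\nabla_X A)_Y Z - (\nabla_Y A)_X Z$, $A_{\Tor^\nabla(X,Y)}Z$, and $A_X A_Y Z - A_Y A_X Z$, which is the claimed formula.

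There is no real obstacle: the only subtle bookkeeping point is that $\nabla$ is not torsion-free, which is exactly what produces the asymmetric term $A_{\Tor^\nabla(X,Y)} Z$ (and the extra $\nabla_{\Tor^\nabla(X,Y)} Z$ contribution to $\Riem^\nabla$). Getting the sign conventions right for $A$ versus the torsion is the only place where one could easily make an error, so I would double-check by evaluating both sides on a Levi-Civita-parallel frame where $\nabla A$ and $A A$ terms can be tested independently.
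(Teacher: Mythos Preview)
Your proposal is correct and follows essentially the same route as the paper: both substitute $D=\nabla+A$ into the definition of $\Riem^D$, expand via the Leibniz rule, and collect terms, with the torsion of $\nabla$ producing the extra $A_{\Tor^\nabla(X,Y)}Z$ term. The only cosmetic difference is that the paper subtracts first and then applies the Leibniz rule to the resulting block, whereas you apply Leibniz before subtracting; the cancellations and the final identification are identical.
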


\begin{proof}
This type of computation is standard, we include it for the sake of completeness. By definition the curvature of $D$ is
\[
\Riem^{D}(X,Y)Z
= D_X(D_YZ)-D_Y(D_XZ)-D_{[X,Y]}Z.
\]
Expand each $D$ using $D=\nabla + A$:
\begin{align*}
D_X(D_YZ)
&= \nabla_X(\nabla_YZ + A_Y Z) + A_X\big(\nabla_YZ + A_Y Z\big) \\
&= \nabla_X\nabla_YZ + \nabla_X\big(A_Y Z\big) + A_X\nabla_YZ + A_X A_Y Z,
\end{align*}
and similarly
\begin{align*}
D_Y(D_XZ)
&= \nabla_Y\nabla_XZ + \nabla_Y\big(A_X Z\big) + A_Y \nabla_XZ + A_Y A_X Z,\\
D_{[X,Y]}Z
&= \nabla_{[X,Y]}Z + A_{[X,Y]} Z.
\end{align*}
Subtracting, we get
\begin{multline*}
\Riem^{D}(X,Y)Z
= \big(\nabla_X\nabla_YZ - \nabla_Y\nabla_XZ - \nabla_{[X,Y]}Z\big) \\
+ \big(\nabla_X\big(A_Y Z\big)-\nabla_Y\big(A_X Z\big)-A_{[X,Y]} Z\big) \\
+ \big(A_X \nabla_YZ - A_Y \nabla_XZ\big)
+ \big(A_X A_Y Z - A_Y A_X Z\big).
\end{multline*}

The first parenthesis is $\Riem^{\nabla}(X,Y)Z$. Expand the second parenthesis using the Leibniz rule:
\begin{align*}
\nabla_X\big(A_Y Z\big)
&= (\nabla_XA)_Y  Z + A_{\nabla_X Y}Z +A_Y \nabla_XZ,\\
\nabla_Y\big(A_X Z\big)
&= (\nabla_YA)_X  Z + A_{\nabla_Y X}Z +A_X \nabla_YZ.
\end{align*}
Hence
\begin{align*}
& \nabla_X\big(A_Y Z\big)-\nabla_Y\big(A_X Z\big)-A_{[X,Y]} Z \\
=&  (\nabla_XA)_Y  Z - (\nabla_YA)_X  Z
+ A_Y \nabla_XZ - A_X \nabla_YZ+A_{\nabla_X Y-\nabla_Y X-[X,Y]}Z.
\end{align*}
Note that by definition the subscript in the last term is $\Tor^{\nabla}(X,Y)$. Substitute this into the expression for $\Riem^D$; the term $A_Y \nabla_XZ - A_X\nabla_YZ$ cancels against the earlier $A_X \nabla_YZ - A_Y \nabla_XZ$ term, leaving
\begin{align*}
\Riem^{D}(X,Y)Z =&\Riem^{\nabla}(X,Y)Z
+(\nabla_XA)_Y Z-(\nabla_YA)_X Z \\
& + A_{\Tor^{\nabla}(X,Y)} Z
+ A_X A_Y Z - A_Y A_X Z.
\end{align*}
\end{proof}

With this lemma at hands, we are now in position to give a formula evaluating the index form \eqref{def:index} on horizontal vectors fields.

\begin{proposition}
Let $X \in \mathfrak{X}(\Ho)$ and $\gamma:[0,r]\to M$ be a unit speed geodesic. Then,
\begin{align*}
I (\gamma,X)   = \int_0^r & \left | \nabla_{\dot \gamma}  X +\frac{1}{2} (J_{\dot \gamma}  X)_\mathcal{H}   \right|^2  -\left \langle \mathrm{Riem}^\nabla(\dot \gamma,X)X, \dot \gamma \right \rangle  +\left\langle (\nabla_X \Tor^\nabla)(X,\dot\gamma),\dot \gamma_\V \right\rangle\\
 &-\frac{1}{4} | J_{\dot \gamma} X|^2_\Ho+ |\Tor^\nabla(X,\dot\gamma)|^2+\left\langle \Tor^\nabla(\Tor^\nabla(\dot\gamma,X),X),\dot \gamma \right\rangle \, dt
\end{align*}
\end{proposition}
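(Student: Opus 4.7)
I would split the index form into its kinetic piece $|D_{\dot\gamma} X|^2$ and its curvature piece $-\langle\Riem^D(\dot\gamma,X)X,\dot\gamma\rangle$, expand each using the relation \eqref{Levi-Civita} between $D$ and $\nabla$ together with Lemma \ref{Riemann in terms of Hlad}, and then regroup. Two immediate simplifications follow from the assumption $X\in\mathfrak{X}(\Ho)$: the defining relation \eqref{Jmap} gives $J_X=0$ (since $\Tor^\nabla$ takes values in $\V$), so $A_X Y=-\tfrac12\Tor^\nabla(X,Y)+\tfrac12 J_Y X$ for every $Y$, and in particular $A_X X=0$ by antisymmetry of the torsion.

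For the kinetic piece, write $D_{\dot\gamma}X=\nabla_{\dot\gamma}X+A_{\dot\gamma}X$ with $A_{\dot\gamma}X=-\tfrac12\Tor^\nabla(\dot\gamma,X)+\tfrac12 J_{\dot\gamma}X$. The first summand is horizontal (since $\nabla$ preserves $\Ho$), the term $\Tor^\nabla(\dot\gamma,X)$ is vertical (since $\Tor^\nabla(\Ho,\Ho)\subset\V$ and $\Tor^\nabla(\V,\Ho)\subset\V$), while $J_{\dot\gamma}X$ splits into its $\Ho$ and $\V$ parts. Orthogonality of $\Ho$ and $\V$ then yields
\[
|D_{\dot\gamma}X|^2=\Bigl|\nabla_{\dot\gamma}X+\tfrac12(J_{\dot\gamma}X)_\Ho\Bigr|^2+\tfrac14\bigl|\Tor^\nabla(\dot\gamma,X)-(J_{\dot\gamma}X)_\V\bigr|^2,
\]
and expanding the vertical square produces $\tfrac14|\Tor^\nabla(\dot\gamma,X)|^2$, $\tfrac14|(J_{\dot\gamma}X)_\V|^2$, and a cross term $-\tfrac12\langle\Tor^\nabla(\dot\gamma,X),(J_{\dot\gamma}X)_\V\rangle$, which by \eqref{Jmap} can be rewritten as $-\tfrac12\langle\dot\gamma_\V,\Tor^\nabla(X,\Tor^\nabla(\dot\gamma,X))\rangle$.

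For the curvature piece, apply Lemma \ref{Riemann in terms of Hlad} with $Y=X$. Since $A_X X=0$, the term $\langle A_{\dot\gamma}A_X X,\dot\gamma\rangle$ vanishes. For $(\nabla_{\dot\gamma}A)_X X$, the Leibniz rule gives $(\nabla_{\dot\gamma}A)_X X=-A_{\nabla_{\dot\gamma}X}X-A_X\nabla_{\dot\gamma}X$, and since $\nabla_{\dot\gamma}X$ is horizontal (so $J_{\nabla_{\dot\gamma}X}=0$ as well), this collapses to $\tfrac12\Tor^\nabla(\nabla_{\dot\gamma}X,X)+\tfrac12\Tor^\nabla(X,\nabla_{\dot\gamma}X)=0$ by antisymmetry. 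The remaining term $(\nabla_X A)_{\dot\gamma}X$ is expanded via the definitions of $A$ and of the covariant derivative of a tensor; the torsion symmetry \eqref{symmetry torsion} and the geodesic identity \eqref{geodesic1} are then used to isolate the vertical contribution $\langle(\nabla_X\Tor^\nabla)(X,\dot\gamma),\dot\gamma_\V\rangle$ of the target formula. Finally, the quartic terms $\langle A_{\Tor^\nabla(\dot\gamma,X)}X,\dot\gamma\rangle$ and $\langle A_X A_{\dot\gamma}X,\dot\gamma\rangle$ are expanded directly from the definition of $A$; converting $J$-pairings back into torsion pairings via \eqref{Jmap} generates the nested torsion $\langle\Tor^\nabla(\Tor^\nabla(\dot\gamma,X),X),\dot\gamma\rangle$ together with $J$-quadratic pieces that combine with those from the kinetic expansion.

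The main obstacle is the bookkeeping in the last step: one must track how the $J$-cross terms from $|D_{\dot\gamma}X|^2$ combine with the quartic-$A$ contributions from the curvature side so that their $\V$-components coalesce into $|\Tor^\nabla(X,\dot\gamma)|^2$ while their $\Ho$-components leave exactly $-\tfrac14|J_{\dot\gamma}X|^2_\Ho$, and to verify that \eqref{symmetry torsion} is sharp enough to trade every lingering $\nabla_{\dot\gamma}\Tor^\nabla$ contribution against the single $\nabla_X\Tor^\nabla$ term appearing in the statement.
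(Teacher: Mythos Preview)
Your plan is correct and matches the paper's proof exactly: both split the index form into the kinetic piece $|D_{\dot\gamma}X|^2$ and the curvature piece, apply Lemma~\ref{Riemann in terms of Hlad}, and compute the five resulting terms using the simplifications $J_X=0$ and $A_XX=0$. Your worry in the last paragraph is unfounded, though: since $(\nabla_{\dot\gamma}A)_XX$ vanishes outright there are no lingering $\nabla_{\dot\gamma}\Tor^\nabla$ contributions, and neither \eqref{symmetry torsion} nor the geodesic equation \eqref{geodesic1} is needed for the $(\nabla_X A)_{\dot\gamma}X$ term---differentiating the defining relation \eqref{Jmap} gives $\langle(\nabla_X J)_{\dot\gamma}X,\dot\gamma\rangle=\langle\dot\gamma,(\nabla_X\Tor^\nabla)(X,\dot\gamma)\rangle$, and the $\dot\gamma_\V$ in the statement appears simply because $\Tor^\nabla$ (hence $\nabla_X\Tor^\nabla$) is vertical-valued.
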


\begin{proof}
From the definition \eqref{def:index} we have
\begin{align}\label{demo:index}
I (\gamma,X)   =\int_0^r \left | D_{\dot \gamma}  X \right|^2 -\left \langle \mathrm{Riem}^D(\dot \gamma,X)X, \dot \gamma \right \rangle  dt.
\end{align}
Substitute the formula for $\mathrm{Riem}^D$ given in Lemma \ref{Riemann in terms of Hlad} and  go one by one over the terms appearing in \eqref{demo:index}. The first term is the covariant derivative:
\begin{align*}
| D_{\dot \gamma} X |^2&=  \left|\nabla_{\dot \gamma} X -\frac{1}{2} \Tor^\nabla (\dot \gamma,X)+\frac{1}{2} J_{\dot \gamma} X \right|^2 \\
 &=  \left|\nabla_{\dot \gamma} X +\frac{1}{2} (J_{\dot \gamma} X)_\mathcal{H} \right|^2+\left| -\frac{1}{2} \Tor^\nabla (\dot \gamma,X)+\frac{1}{2} (J_{\dot \gamma} X)_\V \right|^2 \\
 &=\left|\nabla_{\dot \gamma} X +\frac{1}{2} (J_{\dot \gamma} X)_\mathcal{H} \right|^2 +\frac{1}{4}\left| \Tor^\nabla (\dot \gamma,X)\right|^2+\frac{1}{4} \left| (J_{\dot \gamma} X)_\V \right|^2-\frac{1}{2} \left\langle \Tor^\nabla (\dot \gamma,X), J_{\dot \gamma} X \right\rangle \\
 &=\left|\nabla_{\dot \gamma} X +\frac{1}{2} (J_{\dot \gamma} X)_\mathcal{H} \right|^2 +\frac{1}{4}\left| \Tor^\nabla (\dot \gamma,X)\right|^2+\frac{1}{4} \left| (J_{\dot \gamma} X)_\V \right|^2+\frac{1}{2} \left\langle \Tor^\nabla (\Tor^\nabla (\dot \gamma,X),X), \dot \gamma \right\rangle
\end{align*}
After $\Riem^\nabla$ the next three terms are as follows:
\begin{align*}
(\nabla_{\dot \gamma} A)_X X&= \nabla_{\dot \gamma} (A_X X) -A_{\nabla_{\dot \gamma} X}X-A_X \nabla_{\dot \gamma} X \\
 &=\frac{1}{2} \Tor^\nabla (\nabla_{\dot \gamma} X,X)+\frac{1}{2} \Tor^\nabla (X, \nabla_{\dot \gamma} X)=0
\end{align*}
\begin{align*}
\left\langle (\nabla_{X} A)_{\dot \gamma} X,\dot \gamma \right\rangle&=\left\langle-\frac{1}{2} (\nabla_X \Tor^\nabla) (\dot \gamma ,X)+\frac{1}{2} (\nabla_X J)_{\dot \gamma} X, \dot \gamma \right\rangle\\
 &=-\left\langle(\nabla_X \Tor^\nabla) (\dot \gamma ,X), \dot \gamma \right\rangle
\end{align*}
\begin{align*}
\left\langle A_{\Tor^\nabla (\dot \gamma, X)} X, \dot \gamma \right\rangle&=\left\langle -\frac{1}{2}\Tor^\nabla(\Tor^\nabla(\dot \gamma,X),X)+\frac{1}{2} J_{\Tor^\nabla(\dot \gamma,X)} X , \dot \gamma \right\rangle \\
 &=-\frac{1}{2}\left\langle \Tor^\nabla(\Tor^\nabla(\dot \gamma,X),X), \dot \gamma \right\rangle-\frac{1}{2} | \Tor^\nabla(\dot \gamma,X)|^2. 
\end{align*}
Finally, for the last term we get:
\begin{align*}
 & \left\langle (A_{\dot \gamma} A_X -A_X A_{\dot \gamma})X , \dot \gamma \right\rangle \\
 =&-\left\langle A_X A_{\dot \gamma}X, \dot \gamma \right\rangle  \\
 =& \frac{1}{2} \left\langle A_X  \Tor^\nabla(\dot \gamma,X),\dot\gamma \right\rangle -\frac{1}{2} \left\langle A_X J_{\dot \gamma} X,\dot \gamma \right\rangle \\
 =& -\frac{1}{4} \left\langle  \Tor^\nabla(X, \Tor^\nabla(\dot \gamma,X)),\dot \gamma \right\rangle+\frac{1}{4}\left\langle J_{ \Tor^\nabla(\dot \gamma,X)}X,\dot \gamma \right\rangle +\frac{1}{4}  \left\langle \Tor^\nabla(X, J_{\dot \gamma}X),\dot\gamma\right\rangle-\frac{1}{4}\left\langle J_{ J_{\dot \gamma} X}X,\dot\gamma \right\rangle \\
 =& \frac{1}{4} \left\langle \Tor^\nabla(\Tor^\nabla(\dot \gamma,X),X), \dot \gamma \right\rangle-\frac{1}{4}|  \Tor^\nabla(\dot \gamma,X)|^2 +\frac{1}{4}  |J_{\dot \gamma}X|^2-\frac{1}{4} \left\langle \Tor^\nabla(\Tor^\nabla(\dot \gamma,X),X), \dot \gamma \right\rangle \\
 =&-\frac{1}{4}|  \Tor^\nabla(\dot \gamma,X)|^2 +\frac{1}{4}  |J_{\dot \gamma}X|^2.
\end{align*}
Collecting those terms yields our formula.
\end{proof}

\subsection{The tensor $\mathfrak{R}$}

We now introduce the relevant \textit{Ricci-like} tensor which will be the main character in our analysis of the horizontal Laplacian comparison theorems and the goal of the section is interpret this tensor geometrically. Define a $(2,1)$ tensor by the formula: For $U,V \in \mathfrak{X}(TM)$
\begin{align*}
\mathfrak{R}(U,V)= \sum_{i=1}^n & \left \langle \mathrm{Riem}^\nabla(U,X_i)X_i, V \right \rangle  -\left\langle (\nabla_{X_i} \Tor^\nabla)(X_i,U), V \right\rangle-\left\langle \Tor^\nabla(\Tor^\nabla(U,X_i),X_i),V \right\rangle  \\
 & +\frac{1}{4} \langle J_{U} X_i,J_V X_i\rangle_\Ho- \langle \Tor^\nabla(X_i,U),\Tor^\nabla(X_i,V)\rangle  
\end{align*}
where the $X_i$'s form an arbitrary orthonormal frame of horizontal vectors. Note that it is clear from the definition that $\mathfrak{R}(U,V)$ does not depend on the choice of the $X_i$'s.

\begin{lemma}\label{sym Ricci}
For $U,V \in \mathfrak{X}(TM)$ and $X \in \mathfrak{X}(\Ho)$
\[
\left\langle \mathrm{Riem}^\nabla(V,X)X, U \right \rangle=\left\langle \mathrm{Riem}^\nabla(U,X)X, V \right \rangle=\left \langle \mathrm{Riem}^\nabla(U_\Ho,X)X, V_\Ho \right \rangle.
\]
\end{lemma}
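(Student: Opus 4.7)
The plan has two ingredients. First, I will show that $\Riem^\nabla(V,X)X \in \mathfrak{X}(\Ho)$ whenever $X \in \mathfrak{X}(\Ho)$, using only that $\Ho$ is $\nabla$-parallel. Second, I will establish the symmetry $\langle \Riem^\nabla(V,X)X, U\rangle = \langle \Riem^\nabla(U,X)X, V\rangle$ via the algebraic first Bianchi identity with torsion, exploiting that $\Tor^\nabla$ takes values in $\V$.

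For the first ingredient, expand
\[
\Riem^\nabla(V,X)X = \nabla_V \nabla_X X - \nabla_X \nabla_V X - \nabla_{[V,X]} X.
\]
Since $X \in \mathfrak{X}(\Ho)$ and $\Ho$ is $\nabla$-parallel, each of $\nabla_X X$, $\nabla_V X$, and $\nabla_{[V,X]} X$ is horizontal, and applying $\nabla$ once more keeps things in $\Ho$. In particular, $\langle \Riem^\nabla(V,X)X, U\rangle = \langle \Riem^\nabla(V,X)X, U_\Ho\rangle$ for any $U$.

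For the symmetry, I will apply the algebraic Bianchi identity with torsion to the triple $(X,U,V)$, namely
\[
\sum_{\text{cyc}} \Riem^\nabla(X,U)V = \sum_{\text{cyc}} \bigl\{ (\nabla_X \Tor^\nabla)(U,V) + \Tor^\nabla(\Tor^\nabla(X,U),V) \bigr\},
\]
and take the inner product with $X$. The middle term on the left, $\langle \Riem^\nabla(U,V)X, X\rangle$, vanishes because $\nabla$ is metric. On the right, the listed properties $\Tor^\nabla(\Ho,\Ho)\subset\V$, $\Tor^\nabla(\Ho,\V)\subset\V$, $\Tor^\nabla(\V,\V)=0$ show that $\Tor^\nabla$ has range in $\V$; since $\nabla$ preserves $\V$, every term $\Tor^\nabla(\Tor^\nabla(\cdot,\cdot),\cdot)$ and $(\nabla_\cdot \Tor^\nabla)(\cdot,\cdot)$ is vertical, hence orthogonal to $X \in \Ho$. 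This gives
\[
\langle \Riem^\nabla(X,U)V, X\rangle + \langle \Riem^\nabla(V,X)U, X\rangle = 0,
\]
which, after using $\Riem^\nabla(A,B) = -\Riem^\nabla(B,A)$ in the first two slots and the skew-symmetry $\langle \Riem^\nabla(A,B)C, D\rangle = -\langle \Riem^\nabla(A,B)D, C\rangle$ in the last two (valid since $\nabla$ is metric), rearranges to $\langle \Riem^\nabla(V,X)X, U\rangle = \langle \Riem^\nabla(U,X)X, V\rangle$.

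The third equality then follows by iterating the two ingredients: $\langle \Riem^\nabla(U,X)X, V\rangle = \langle \Riem^\nabla(U,X)X, V_\Ho\rangle$ by horizontality, then symmetry gives $\langle \Riem^\nabla(V_\Ho, X)X, U\rangle$, another application of horizontality yields $\langle \Riem^\nabla(V_\Ho, X)X, U_\Ho\rangle$, and a final symmetry produces $\langle \Riem^\nabla(U_\Ho, X)X, V_\Ho\rangle$. The only slightly delicate point is checking that $(\nabla_Y \Tor^\nabla)(U,V)$ is vertical, which follows by writing it as $\nabla_Y(\Tor^\nabla(U,V)) - \Tor^\nabla(\nabla_Y U, V) - \Tor^\nabla(U, \nabla_Y V)$ and invoking that $\Tor^\nabla$ has range in $\V$ while $\nabla$ preserves $\V$.
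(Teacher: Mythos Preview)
Your proof is correct and relies on the same ingredients as the paper: horizontality of $\Riem^\nabla(\cdot,X)X$ from $\nabla$-parallelism of $\Ho$, the first Bianchi identity with torsion, verticality of $\Tor^\nabla$ and of $(\nabla_\cdot \Tor^\nabla)$, and the skew-symmetries of $\Riem^\nabla$ coming from $\nabla$ being metric. The only difference is organizational: the paper first shows $\langle \Riem^\nabla(U_\V,X)X, V_\Ho\rangle = 0$ directly (via Bianchi applied to the triple $(U_\V,X,V_\Ho)$) and then remarks that the symmetry follows by the same method, whereas you prove the symmetry first (via Bianchi on $(X,U,V)$) and then iterate symmetry and horizontality to obtain the reduction to $U_\Ho, V_\Ho$.
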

\begin{proof}
Since $\nabla$ preserves horizontality, we have $\mathrm{Riem}^\nabla(U,X)X \in  \mathfrak{X}(\Ho)$ and therefore
\begin{align*}
 \langle \mathrm{Riem}^\nabla(U,X)X, V \rangle= \langle \mathrm{Riem}^\nabla(U,X)X,  V_\mathcal{H} \rangle.
\end{align*}
 Using then the fact that $  \mathrm{Riem}^\nabla(U_\V,X)$ is skew-symmetric  we get
\begin{align*}
 \langle \mathrm{Riem}^\nabla(U_\V,X)X, V_\Ho \rangle&=- \langle \mathrm{Riem}^\nabla(U_\V,X)V_\Ho, X \rangle.
\end{align*}
Recall then the Bianchi identity for connections with torsion \cite[Theorem 1.24]{Besse}: For arbitrary vector fields $A,B,C\in\mathfrak{X}(TM)$.
\begin{equation}\label{eq:bianchi}
  \circlearrowleft \Riem^\nabla(A,B)C
  \;=\;
  \circlearrowleft (\nabla_A \Tor^\nabla)(B,C)
  \;+\;
  \circlearrowleft \Tor^\nabla\big(\Tor^\nabla(A,B),C\big),
\end{equation}
where  $\circlearrowleft$ denotes the cyclic sum.  Using the fact that the torsion is always vertical yields
\[
\langle \mathrm{Riem}^\nabla(U_\V,X)V_\Ho, X \rangle=-\langle \mathrm{Riem}^\nabla(X, V_\Ho)U_\V, X \rangle-\langle \mathrm{Riem}^\nabla( V_\Ho,U_\V)X, X \rangle.
\]
Since $ \mathrm{Riem}^\nabla(X, V_\Ho)U_\V$ is vertical and $\mathrm{Riem}^\nabla(V_\Ho,U_\V)$ skew-symmetric we infer
\[
 \langle \mathrm{Riem}^\nabla(U_\V,X)X, V_\Ho \rangle=0
\]
and conclude
\begin{align*}
 \langle \mathrm{Riem}^\nabla(U,X)X, V \rangle= \langle \mathrm{Riem}^\nabla(U_\Ho,X)X,  V_\mathcal{H} \rangle.
\end{align*}
The symmetry 
\[
\left\langle \mathrm{Riem}^\nabla(V,X)X, U \right \rangle=\left\langle \mathrm{Riem}^\nabla(U,X)X, V \right \rangle
\]
follows from the same type of computations: first use the skew-symmetry of $\mathrm{Riem}^\nabla$ and then Bianchi's identity.
\end{proof}

In order to better understand  $\mathfrak{R}$ we introduce the following definitions/notations.  As before, the $X_i$'s below form an arbitrary orthonormal frame of horizontal vectors and $U, V$ are arbitrary vectors  in $\mathcal{X}(TM)$.

\begin{itemize}
\item The horizontal Ricci curvature if the connection $\nabla$ is defined as the $(2,0)$ tensor
\[
\mathrm{Ric}^\nabla_\Ho(U,V)=\sum_{i=1}^n  \left \langle \mathrm{Riem}^\nabla(U,X_i)X_i, V \right.\rangle
\]
\item The horizontal divergence of the torsion is defined as the $(1,1)$ tensor
\[
\delta^\nabla_\Ho T (U)= \sum_{i=1}^n \nabla_{X_i} \Tor^\nabla(X_i,U).
\]
\item
\[
(\Tor^\nabla_U ,\Tor^\nabla_V)_\Ho=\sum_{i=1}^n \left\langle \Tor^\nabla (U,X_i), \Tor^\nabla (V,X_i)\right\rangle_\Ho
\]
\item 
\[
(J_U ,J_V)_\Ho=\sum_{i=1}^n \left\langle J_U X_i , J_V X_i \right\rangle_\Ho
\]
\end{itemize}

\begin{remark}
Assume that the foliation $\mathcal{F}$ arises from a Riemannian submersion $\pi :(M,g) \to (B,h)$. Let $X,Y \in \mathfrak{X}(\Ho)$ be basic vector fields on $M$, then it follows from Remark \ref{connection project} that
\[
\mathrm{Ric}^\nabla_\Ho(X,Y)=\mathrm{Ric}_B(\pi_*X,\pi_*Y)
\]
where $\mathrm{Ric}_B$ is the Ricci curvature of the base manifold $(B,h)$.
\end{remark}

With those definitions, we can decompose $\mathfrak{R}$ as follows.

\begin{proposition}\label{example carnot r}
\
\begin{enumerate}
\item For $X,Y \in \mathfrak{X}(\Ho)$,
\[
\mathfrak{R}(X,Y)=\mathfrak{R}(Y,X)=\mathrm{Ric}^\nabla_\Ho(X,Y)-(\Tor^\nabla_X ,\Tor^\nabla_Y)_\Ho .
\]
\item For $U \in \mathfrak{X}(\V)$, $X \in \mathfrak{X}(\Ho)$,
\[
\mathfrak{R}(U,X)=-(\Tor^\nabla_U ,\Tor^\nabla_X)_\Ho .
\]
\item For $U \in \mathfrak{X}(\V)$, $X \in \mathfrak{X}(\Ho)$,
\[
\mathfrak{R}(X,U)=-\left\langle \delta_\Ho^\nabla \Tor^\nabla (X),U \right\rangle-2(\Tor^\nabla_U ,\Tor^\nabla_X)_\Ho .
\]
\item  For $U,V \in \mathfrak{X}(\V)$, 
\[
\mathfrak{R}(U,V)=\mathfrak{R}(V,U)=-\left\langle \delta_\Ho^\nabla \Tor^\nabla (U),V \right\rangle+\frac{1}{4}(J_U,J_V)_\Ho-2(\Tor^\nabla_U ,\Tor^\nabla_V)_\Ho .
\]

\end{enumerate}
\end{proposition}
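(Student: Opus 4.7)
The plan is to work directly from the definition of $\mathfrak{R}$ by going case by case through the four combinations of horizontal/vertical inputs, exploiting in each case four structural facts: $\nabla$ preserves the splitting $TM=\Ho\oplus\V$, $\Tor^\nabla$ is always vertical with $\Tor^\nabla(\V,\V)=0$, $J_Z=0$ whenever $Z\in\mathfrak{X}(\Ho)$, and Lemma \ref{sym Ricci}, which makes the $\Riem^\nabla$ summand vanish as soon as one argument is vertical. The bulk of the work is simply identifying which of the five summands in the definition of $\mathfrak{R}$ can survive in each case.

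Cases (i) and (ii) are immediate. In (i), with $X,Y\in\mathfrak{X}(\Ho)$, the $J$-summand vanishes since $J_X=0$; an easy Leibniz expansion shows that $(\nabla_{X_i}\Tor^\nabla)(X_i,X)$ and $\Tor^\nabla(\Tor^\nabla(X,X_i),X_i)$ are both vertical and hence pair trivially with horizontal $Y$; what survives is the Ricci piece plus the torsion-norm piece, and the symmetry $\mathfrak{R}(X,Y)=\mathfrak{R}(Y,X)$ is Lemma \ref{sym Ricci}. In (ii), $\mathfrak{R}(U,X)$ loses its $\Riem^\nabla$ piece by Lemma \ref{sym Ricci} (since $U_\Ho=0$), its $J$ piece by $J_X=0$, and its two middle pieces (both vertical) by pairing with horizontal $X$; only $-(\Tor^\nabla_U,\Tor^\nabla_X)_\Ho$ remains.

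The substantive work is in (iii) and (iv), where one must convert a cubic-torsion expression into a torsion-norm contribution. In (iii), $\Riem^\nabla(X,X_i)X_i$ is horizontal so the Ricci term vanishes, $J_X=0$ kills the $J$ term, and the divergence term gives $-\langle\delta_\Ho^\nabla\Tor^\nabla(X),U\rangle$ straight from the definition. The key identity is obtained by applying \eqref{symmetry torsion} to the horizontal vector $X_i$ with the two vertical vectors $\Tor^\nabla(X,X_i)$ and $U$:
\[
\bigl\langle\Tor^\nabla(\Tor^\nabla(X,X_i),X_i),U\bigr\rangle
=\bigl\langle\Tor^\nabla(U,X_i),\Tor^\nabla(X,X_i)\bigr\rangle,
\]
which, combined with the final summand, yields the coefficient $-2$. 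Case (iv) follows the same pattern: Lemma \ref{sym Ricci} removes the Ricci term, the same trick handles the cubic-torsion term, and the $J$ piece now contributes $\tfrac14(J_U,J_V)_\Ho$. The symmetry $\mathfrak{R}(U,V)=\mathfrak{R}(V,U)$ in (iv) reduces to $\langle\delta_\Ho^\nabla\Tor^\nabla(U),V\rangle=\langle\delta_\Ho^\nabla\Tor^\nabla(V),U\rangle$, which follows from the observation that the $(0,3)$ tensor $(A,B,C)\mapsto\langle\Tor^\nabla(A,B),C\rangle$ is symmetric in its first and third slots on $\V\times\Ho\times\V$ by \eqref{symmetry torsion}, and that this symmetry is preserved by $\nabla_{X_i}$ because $\nabla$ is metric and respects the splitting.

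The main obstacle is strictly bookkeeping: one must carefully track the horizontal/vertical type of every term produced by the Leibniz expansions and recognize the single place where \eqref{symmetry torsion} converts a cubic torsion contraction into an inner product of two torsions. No identity beyond \eqref{symmetry torsion}, Lemma \ref{sym Ricci}, and the structural properties of $\nabla$ enters the argument.
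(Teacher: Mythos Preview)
Your proposal is correct and follows essentially the same approach as the paper: both argue case by case, using Lemma \ref{sym Ricci} to kill the $\Riem^\nabla$ contribution whenever a vertical argument appears, the verticality of $\Tor^\nabla$ (and $J_Z=0$ for horizontal $Z$) to discard the remaining terms, and the symmetry \eqref{symmetry torsion} to convert the cubic torsion contraction in (iii) and (iv) into $(\Tor^\nabla_U,\Tor^\nabla_X)_\Ho$. Your justification of the symmetry in (iv) via the $\nabla$-stability of the $(0,3)$ tensor symmetry is slightly more explicit than the paper's one-line appeal to \eqref{symmetry torsion}, but the content is identical.
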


\begin{proof}

\

\begin{enumerate}
\item This follows from the definition of $\mathrm{Ric}^\nabla_\Ho$, Lemma \ref{sym Ricci} and the fact that $\Tor^\nabla$ is always vertical.
\item This is immediate from Lemma \ref{sym Ricci} and the fact that $\Tor^\nabla$ is always vertical.
\item We have from \eqref{symmetry torsion} 
\begin{align*}
\sum_{i=1}^n \left\langle \Tor^\nabla(\Tor^\nabla(X,X_i),X_i),U \right\rangle =\sum_{i=1}^n \left\langle \Tor^\nabla(X,X_i),\Tor^\nabla(U,X_i) =(\Tor^\nabla_U ,\Tor^\nabla_X)_\Ho\right\rangle
\end{align*}
and the other terms follow either from the definition or as before.
\item The computation is similar to the others. Note that the symmetry $\mathfrak{R}(U,V)=\mathfrak{R}(V,U)$ also follows from \eqref{symmetry torsion}.
\end{enumerate}
\end{proof}

The following corollary is straightforward.

\begin{corollary}
The tensor $\mathfrak{R}$ is symmetric, i.e. $\mathfrak{R}(U,V)=\mathfrak{R}(V,U)$ for every $U,V \in \mathfrak{X}(TM)$ if and only if the horizontal distribution is Yang-Mills, meaning that for every $U \in \mathfrak{X}(\V)$, $X \in \mathfrak{X}(\Ho)$,
\begin{align}\label{YM condition}
\left\langle \delta_\Ho^\nabla \Tor^\nabla (X),U \right\rangle+(\Tor^\nabla_U ,\Tor^\nabla_X)_\Ho=0 .
\end{align}
\end{corollary}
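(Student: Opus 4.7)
The plan is to reduce the symmetry question to the mixed horizontal/vertical sector by combining bilinearity of $\mathfrak{R}$ with the four-case decomposition of Proposition~\ref{example carnot r}. That proposition already guarantees $\mathfrak{R}(U_\Ho, V_\Ho) = \mathfrak{R}(V_\Ho, U_\Ho)$ via item (i) and $\mathfrak{R}(U_\V, V_\V) = \mathfrak{R}(V_\V, U_\V)$ via item (iv), so only the mixed blocks can ever produce an asymmetry.

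Concretely, I would first write, for arbitrary $U,V \in \mathfrak{X}(TM)$,
\[
\mathfrak{R}(U,V) - \mathfrak{R}(V,U) = \bigl[\mathfrak{R}(U_\Ho, V_\V) - \mathfrak{R}(V_\V, U_\Ho)\bigr] + \bigl[\mathfrak{R}(U_\V, V_\Ho) - \mathfrak{R}(V_\Ho, U_\V)\bigr],
\]
thereby reducing global symmetry of $\mathfrak{R}$ to the scalar identity $\mathfrak{R}(X,U) = \mathfrak{R}(U,X)$ for all $X \in \mathfrak{X}(\Ho)$ and $U \in \mathfrak{X}(\V)$. Next, substituting the explicit formulas (ii) and (iii) of Proposition~\ref{example carnot r}, the equality $\mathfrak{R}(X,U)=\mathfrak{R}(U,X)$ reads
\[
-\langle \delta_\Ho^\nabla \Tor^\nabla (X), U\rangle - 2(\Tor^\nabla_U, \Tor^\nabla_X)_\Ho = -(\Tor^\nabla_U, \Tor^\nabla_X)_\Ho,
\]
which rearranges instantly to the Yang--Mills condition \eqref{YM condition}, and the equivalence holds at every pair $(X,U)$.

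I do not anticipate any real obstacle in this argument: it is algebraic bookkeeping on top of an already-proved case-by-case decomposition. The only subtle point worth flagging is the coefficient $2$ appearing in item (iii) versus item (ii), which is precisely what allows the extra $(\Tor^\nabla_U, \Tor^\nabla_X)_\Ho$ term to combine with $\langle \delta_\Ho^\nabla \Tor^\nabla(X), U\rangle$ and produce the Yang--Mills combination---if either coefficient were different, the clean equivalence would fail, so a careful recheck of that factor in the proof of the proposition is the one sanity check I would perform before writing up.
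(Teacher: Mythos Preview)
Your proposal is correct and matches the paper's intended argument: the paper states the corollary as ``straightforward'' from Proposition~\ref{example carnot r}, and your reduction to the mixed sector followed by direct comparison of items (ii) and (iii) is precisely that straightforward verification. The algebra you carry out is accurate, and your cautionary note about the coefficient $2$ in item (iii) is well placed.
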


The terminology Yang-Mills for \eqref{YM condition} originates from \cite[Definition 9.35]{Besse}. Notice that this condition is intrinsically sub-Riemannian, i.e. it only depends on $\mathcal{H}$ and the restriction of $g$ to $\mathcal{H}$ and not on the metric on the leaves.

\begin{example}[Carnot groups]
Consider the foliation on a Carnot group as in Proposition \ref{Example Carnot}. Computations show that:
\begin{enumerate}
\item For $X,Y \in \mathfrak{X}(\Ho)$,
\[
\mathfrak{R}(X,Y)=-\sum_{i=1}^n \left\langle \mathrm{ad}_{X_i} (X),  \mathrm{ad}_{X_i} (Y) \right\rangle.
\]
\item For $U \in \mathfrak{X}(\V)$, $X \in \mathfrak{X}(\Ho)$,
\[
\mathfrak{R}(U,X)=-\frac{1}{2} \sum_{i=1}^n \left\langle \mathrm{ad}_{X_i} (X),  \mathrm{ad}^*_{X_i} (U) \right\rangle .
\]
\item For $U \in \mathfrak{X}(\V)$, $X \in \mathfrak{X}(\Ho)$,
\[
\mathfrak{R}(X,U)=-\frac{1}{2} \sum_{i=1}^n \left\langle \mathrm{ad}_{X_i} (X),  \mathrm{ad}^*_{X_i} (U) \right\rangle .
\]
\item  For $U,V \in \mathfrak{X}(\V)$, 
\begin{align*}
\mathfrak{R}(U,V)= &\frac{1}{4}(J_U,J_V)_\Ho-\sum_{i=1}^n \left\langle \mathrm{ad}_{X_i} (U),  \mathrm{ad}_{X_i} (V) \right\rangle -\frac{1}{2}\sum_{i=1}^n \left\langle \mathrm{ad}_{X_i} (U),  \mathrm{ad}^*_{X_i} (V) \right\rangle \\
 &-\frac{1}{2}\sum_{i=1}^n \left\langle \mathrm{ad}_{X_i} (U),  \mathrm{ad}^*_{X_i} (V) \right\rangle.
\end{align*}
\end{enumerate}
In particular, in any Carnot group the horizontal distribution if Yang-Mills and there exists $K \le 0$ such that for every $U \in \mathfrak{X}(TM)$, $\mathfrak{R}(U,U) \ge K |U|^2$.
\end{example}

To conclude this section, we point out how the formulas simplify when the foliation is totally geodesic. The computations are straightforward and follow from the fact that on totally geodesic foliations $\Tor^\nabla (\Ho,\V) =0$.

\begin{corollary}
Assume that the foliation is totally geodesic.
\begin{enumerate}
\item For $X,Y \in \mathfrak{X}(\Ho)$,
\[
\mathfrak{R}(X,Y)=\mathrm{Ric}^\nabla_\Ho(X,Y)-(\Tor^\nabla_X ,\Tor^\nabla_X)_\Ho .
\]
\item For $U \in \mathfrak{X}(\V)$, $X \in \mathfrak{X}(\Ho)$,
\[
\mathfrak{R}(U,X)=0 .
\]
\item For $U \in \mathfrak{X}(\V)$, $X \in \mathfrak{X}(\Ho)$,
\[
\mathfrak{R}(X,U)=-\left\langle \delta_\Ho^\nabla \Tor^\nabla (X),U \right\rangle .
\]
\item  For $U,V \in \mathfrak{X}(\V)$, 
\[
\mathfrak{R}(U,V)=\frac{1}{4}(J_U,J_V)_\Ho .
\]

\end{enumerate}

\end{corollary}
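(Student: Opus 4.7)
The plan is to deduce this corollary directly from Proposition \ref{example carnot r} by feeding in the single structural fact recalled just before the statement, namely that a totally geodesic foliation is characterized by $\Tor^\nabla(\Ho,\V)=0$. Concretely, I would open with this remark and then walk through the four items by plugging $\Tor^\nabla(\Ho,\V)=0$ into each of the four corresponding formulas of Proposition \ref{example carnot r}. Item (i) requires no work at all: the first formula of the proposition involves only $\Tor^\nabla(X,X_i)$ and $\Tor^\nabla(Y,X_i)$ with all arguments horizontal, and nothing in this piece simplifies under the totally geodesic assumption.

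For items (ii) and (iii) the key point is that every term of the form $(\Tor^\nabla_U,\Tor^\nabla_X)_\Ho$ with $U\in\mathfrak{X}(\V)$, $X\in\mathfrak{X}(\Ho)$ vanishes, because in its definition each factor $\Tor^\nabla(U,X_i)$ is a torsion of a vertical and a horizontal vector, which is zero. This kills the sole term appearing in (ii) and the second term appearing in (iii), leaving exactly the asserted formulas.

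For item (iv) the same argument handles the term $(\Tor^\nabla_U,\Tor^\nabla_V)_\Ho$, but the nontrivial point is to show that the horizontal divergence of the torsion $\langle \delta_\Ho^\nabla \Tor^\nabla(U),V\rangle$ also vanishes when $U,V\in\mathfrak{X}(\V)$. I expect this to be the main (and really the only) obstacle. The idea is to expand
\[
(\nabla_{X_i}\Tor^\nabla)(X_i,U)=\nabla_{X_i}\bigl(\Tor^\nabla(X_i,U)\bigr)-\Tor^\nabla(\nabla_{X_i}X_i,U)-\Tor^\nabla(X_i,\nabla_{X_i}U)
\]
and then use that $\nabla$ preserves horizontality and verticality (property (1) of $\nabla$): $\nabla_{X_i}X_i$ remains horizontal, $\nabla_{X_i}U$ remains vertical, so each of the three terms on the right is a torsion of a horizontal and a vertical argument and therefore zero by total geodesicity. (A similar, slightly more transparent argument evaluates $(\nabla_{X_i}\Tor^\nabla)(X_i,U)$ against a horizontal test vector to invoke the hypothesis directly.) This yields $\delta_\Ho^\nabla \Tor^\nabla(U)=0$ for vertical $U$, and together with the vanishing of the $(\Tor^\nabla_U,\Tor^\nabla_V)_\Ho$ term produces the announced identity $\mathfrak{R}(U,V)=\tfrac{1}{4}(J_U,J_V)_\Ho$, completing the corollary.
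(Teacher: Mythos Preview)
Your proposal is correct and follows exactly the approach the paper indicates: specialize Proposition~\ref{example carnot r} using $\Tor^\nabla(\Ho,\V)=0$. In fact you supply more detail than the paper, which simply declares the computations straightforward; your handling of item~(iv), expanding $(\nabla_{X_i}\Tor^\nabla)(X_i,U)$ and using that $\nabla$ preserves the splitting, is precisely the right justification.
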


\subsection{Sub-Laplacian comparisons}

We now turn to the comparison theorems.

\begin{theorem}\label{comparison laplacian}
Let $p \in M$ and denote $r_p(x)=d(p,x)$. Assume that there exists $K \in \mathbb{R}$ such that for every $U \in \mathfrak{X}(TM)$
\[
\mathfrak{R}(U,U) \ge K |U|^2.
\] 
Then, for $x\neq p$ not in the  cut-locus of $p$,
\begin{align*}
\Delta_\Ho r_p (x) \le 
\begin{cases}
 \sqrt { nK} \cot\left(\sqrt { \frac{K}{n}} r_p(x) \right) &\text{if}\ K>0,
\\
\displaystyle\frac{n}{r_p(x)} &\text{if}\ K = 0,
\\
 \sqrt{n |K|} \coth\left(\sqrt{\frac{|K|}{n}} r_p(x)\right) &\text{if}\ K<0.
\end{cases}
\end{align*}

\end{theorem}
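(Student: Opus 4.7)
The approach is to combine the classical second-variation (index) lemma for the Riemannian distance function with the horizontal index formula just established, and then to minimise over a scalar test function. Fix $x\neq p$ outside the cut locus of $p$, set $r=r_p(x)$, and let $\gamma:[0,r]\to M$ denote the unit-speed minimizing Riemannian geodesic from $p$ to $x$.

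First I would choose an orthonormal basis $X_1,\ldots,X_n$ of $\Ho_x$ and propagate it backwards along $\gamma$ by a ``$J$-adjusted'' transport, namely the first-order linear ODE
\[
\nabla_{\dot\gamma} E_i \;=\; -\tfrac12\bigl(J_{\dot\gamma} E_i\bigr)_{\Ho},\qquad E_i(r)=X_i,\qquad i=1,\ldots,n.
\]
Since $\nabla$ preserves $\Ho$ and the right-hand side is horizontal, each $E_i(t)$ stays horizontal; and since $J_{\dot\gamma}$ is skew-symmetric, $\tfrac{d}{dt}\langle E_i,E_j\rangle\equiv 0$, so $\{E_i(t)\}$ is a horizontal orthonormal frame at every point of $\gamma$. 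The virtue of this specific transport is that for any scalar $f$ one has
\[
\nabla_{\dot\gamma}(fE_i) + \tfrac12\bigl(J_{\dot\gamma}(fE_i)\bigr)_{\Ho} \;=\; f'\,E_i,
\]
so the first summand in the horizontal index form collapses to $(f')^2$ without any residual $J$-contribution.

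Next I take $f\in C^1([0,r])$ with $f(0)=0$ and $f(r)=1$, and set $V_i=fE_i$. Since $V_i(0)=0$ and $V_i(r)=X_i$, the classical Riemannian index lemma gives $\mathrm{Hess}^D r_p(X_i,X_i)\le I(\gamma,V_i)$ for each $i$. Summing and using $\Dh r_p = \mathrm{Tr}_{\Ho}(\mathrm{Hess}^D r_p)$ from the previous subsection produces $\Dh r_p(x) \le \sum_{i=1}^n I(\gamma,V_i)$. Substituting $V_i=fE_i$ into the horizontal index formula of the preceding proposition, the first summand contributes $n(f')^2$ after summation; the remaining $f^2$-contributions, expanded and summed over $\{E_i\}$, match term-for-term the five expressions in the definition of $\mathfrak R(\dot\gamma,\dot\gamma)$---the explicit $-\tfrac14|J_{\dot\gamma}V_i|^2_{\Ho}$ in the index form precisely reproducing the $\tfrac14(J_{\dot\gamma},J_{\dot\gamma})_{\Ho}$ built into $\mathfrak R$. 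The outcome is the clean identity
\[
\sum_{i=1}^n I(\gamma,V_i) \;=\; \int_0^r\Bigl[\,n(f')^2 - f^2\,\mathfrak R(\dot\gamma,\dot\gamma)\,\Bigr]\,dt.
\]

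Finally, the hypothesis $\mathfrak R(\dot\gamma,\dot\gamma)\ge K|\dot\gamma|^2=K$ bounds the integrand above by $n(f')^2-Kf^2$, and it remains to minimise over $f$ subject to the boundary values $f(0)=0,\ f(r)=1$. The Euler--Lagrange equation $nf''+Kf=0$ has the positive solution proportional to $\sin(\sqrt{K/n}\,t)$, $t$, or $\sinh(\sqrt{|K|/n}\,t)$ according to the sign of $K$; integration by parts against this optimiser reduces the infimum to $nf'(r)$, which is exactly the three right-hand sides appearing in the theorem. The main technical point is the index-form bookkeeping producing the displayed identity---the rest is a routine calculus-of-variations argument. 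The restriction $r<\pi\sqrt{n/K}$ that governs the $K>0$ case is not a defect of the argument: the blow-up of the upper bound at $r=\pi\sqrt{n/K}$ is exactly the mechanism that will yield the Bonnet--Myers diameter estimate announced in the introduction.
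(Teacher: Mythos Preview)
Your proposal is correct and follows essentially the same route as the paper: the paper also transports a horizontal orthonormal frame along $\gamma$ via the $J$-adjusted equation $\nabla_{\dot\gamma}X_i+\tfrac12(J_{\dot\gamma}X_i)_{\Ho}=0$, multiplies by the scalar profile $\mathfrak{s}_K(t)/\mathfrak{s}_K(r)$, applies the index lemma, and arrives at exactly your displayed identity before invoking the curvature bound. The only cosmetic difference is that the paper plugs in the optimal profile $\mathfrak{s}_K$ from the outset rather than first deriving the inequality for general $f$ and then minimising, but the content is identical.
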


\begin{proof}
Let $\gamma:[0,r]\to M$ be the unique unit speed  geodesic between $p$ and $x$.  Note that since $\gamma$ has unit speed, $r=r_p(x)$. Pick an arbitrary orthonormal horizontal frame $v_1,\cdots,v_n$ at $x$. We have at $x$
\[
\Delta_\Ho r_p =\sum_{i=1}^n \mathrm{Hess}^\nabla r_p (v_i,v_i)  .
\]

Let $X_1,\ldots,X_n$ be the family of horizontal vector fields  along $\gamma$ such that
\begin{align*}
\begin{cases}
\nabla_{\dot \gamma}X_i+\frac{1}{2} (J_{\dot \gamma} X_i)_\mathcal{H}=0 \\
X_i(0)=v_i.
\end{cases}
\end{align*}
Note that we have
\begin{align}\label{skew connection}
\nabla_{\dot \gamma} \left\langle X_i, X_j \right\rangle& =  \left\langle \nabla_{\dot \gamma} X_i, X_j \right\rangle+\left\langle  X_i, \nabla_{\dot \gamma}X_j \right\rangle \\ \notag
 &=-\frac{1}{2}\left( \left\langle (J_{\dot \gamma} X_i)_\Ho, X_j \right\rangle+\left\langle  X_i, (J_{\dot \gamma} X_j)_\Ho\right\rangle\right) \\ \notag
 &=-\frac{1}{2}\left( \left\langle J_{\dot \gamma} X_i, X_j \right\rangle+\left\langle  X_i, J_{\dot \gamma} X_j\right\rangle\right) =0.
\end{align}
Therefore $X_1,\ldots,X_n$ is an orthonormal frame along $\gamma$.
Consider now the vector fields along $\gamma$ defined by
\[
Y_i(t)=\frac{\mathfrak{s}_K(t)}{\mathfrak{s}_K(r)} X_i(t), \quad 0\le t \le r,
\]
where
\begin{align*}
\mathfrak{s}_K(t)= 
\begin{cases}
 \sin \left(\sqrt { \frac{K}{n}} t \right) &\text{if}\ K>0,
\\
\displaystyle t &\text{if}\ K = 0,
\\
\sinh \left(\sqrt{\frac{|K|}{n}} t\right) &\text{if}\ K<0.
\end{cases}
\end{align*}

We have
\[
Y_i(0)=0, Y_i(r)=v_i.
\]

Using the index lemma and the fact that the $X_i$ form a horizontal orthonormal frame, one has
\begin{align*}
\Delta_\Ho r_p (x) & \le \sum_{i=1}^n I(\gamma,Y_i) \\
  &=  \sum_{i=1}^n  \int_0^r  \left | \nabla_{\dot \gamma}  Y_i +\frac{1}{2} (J_{\dot \gamma}  Y_i)_\mathcal{H}   \right|^2  -\left \langle \mathrm{Riem}^\nabla(\dot \gamma,Y_i)Y_i, \dot \gamma \right \rangle  +\left\langle (\nabla_{Y_i} \Tor^\nabla)(Y_i,\dot\gamma),\dot \gamma_\V \right\rangle\\
 &\quad -\frac{1}{4} | J_{\dot \gamma} Y_i |^2_\Ho+ |\Tor^\nabla(Y_i,\dot\gamma)|^2+\left\langle \Tor^\nabla(\Tor^\nabla(\dot\gamma,Y_i),Y_i),\dot \gamma \right\rangle \, dt \\
 &=\frac{1}{\mathfrak{s}_K(r)^2}\sum_{i=1}^n  \int_0^r  \mathfrak{s}'_K(t)^2  - \mathfrak{s}_K(t)^2 \left(  \left \langle \mathrm{Riem}^\nabla(\dot \gamma,X_i)X_i, \dot \gamma \right \rangle  +\left\langle (\nabla_{X_i} \Tor^\nabla)(X_i,\dot\gamma),\dot \gamma_\V \right\rangle \right. \\
 &\left. \quad -\frac{1}{4} | J_{\dot \gamma} X_i |^2_\Ho+ |\Tor^\nabla(X_i,\dot\gamma)|^2+\left\langle \Tor^\nabla(\Tor^\nabla(\dot\gamma,X_i),X_i),\dot \gamma \right\rangle \right) \, dt \\
 &=\frac{1}{\mathfrak{s}_K(r)^2}  \int_0^r n  \mathfrak{s}'_K(t)^2-\mathfrak{s}_K(t)^2 \mathfrak{R}(\dot \gamma,\dot \gamma)  dt \\
 &\le \frac{1}{\mathfrak{s}_K(r)^2}  \int_0^r n  \mathfrak{s}'_K(t)^2- K \mathfrak{s}_K(t)^2  dt.
\end{align*}
The result follows after a straightforward computations.
\end{proof}

Besides the sub-Laplacian comparison theorem above, we will also need the following comparison result for the trace of second variation of the length of geodesics with  \textit{skewed $\nabla$-parallel} deformations at the endpoints. We first need to introduce some notations. Denote
\begin{align}\label{bi cut-locus}
\mathcal{C} =\left\{ (p,q) \in M \times M \mid p\neq q, \, p \text{  and  } q \text{ are not in the cut locus of one another} \, \right\}.
\end{align}
Let $v_i$ be  an arbitrary horizontal orthonormal frame at $p$ and let  $\tilde{v}_i=X_i(r)$ where the $X_i$'s are the family of horizontal vector fields  along the unique unit speed geodesic $\gamma:[0,r] \to M$ connecting $p$ to $q$ and such that
\begin{align*}
\begin{cases}
\nabla_{\dot \gamma}X_i+\frac{1}{2} (J_{\dot \gamma} X_i)_\mathcal{H}=0 \\
X_i(0)=v_i.
\end{cases}
\end{align*}
It follows from the computation \eqref{skew connection} that $\tilde{v}_i$ is a orthonormal frame at $q$.

We then consider on $\mathcal{C}$ a horizontal Laplacian with \textit{skewed $\nabla$-parallel coupling} defined for a smooth function $f :\mathcal{C} \to \mathbb R$ as
\begin{align}\label{coupled laplacian}
\Delta^c_{\Ho,\Ho} f (p,q)=\Delta^1_{\Ho} f (p,q)+\Delta^2_{\Ho} f (p,q) +2 \sum_{i=1}^n v_i^1 \tilde{v}_i^2 f(p,q)
\end{align}

In the formula for $\Delta^c_{\Ho,\Ho}$, the superscript $1$ or $2$ means the action on the first or the second set of $M$ variables in the product $M\times M$. Also $v_i^1 \tilde{v}_i^2 f(p,q)$ is defined as $V_i^1 \tilde{V}_i^2 f (p,q)$ where $V_i$ (resp. $\tilde{V}_i$) is any vector field that coincides with $v_i$ (resp. $\tilde{v}_i$) at $p$ (resp. $q$). Finally, note that the term $ \sum_{i=1}^n v_i^1 \tilde{v}_i^2 f$  does not depend on the choice of the frame $v_i$. 

\begin{theorem}\label{comparison coupled laplacian}
 Assume that there exists $K \in \mathbb{R}$ such that for every $U \in \mathfrak{X}(TM)$
\[
\mathfrak{R}(U,U) \ge K |U|^2.
\] 
Then, for $(p,q) \in \mathcal{C}$
\begin{align*}
\Delta^c_{\Ho,\Ho} d (p,q) \le 
\begin{cases}
 -2n\sqrt{K} \tan \left(\sqrt{\frac{K}{n}} \frac{d(p,q)}{2} \right) &\text{if}\ K>0,
\\
0&\text{if}\ K = 0,
\\
 2n\sqrt{|K|} \tanh \left(\sqrt{\frac{|K|}{n}} \frac{d(p,q)}{2} \right) &\text{if}\ K<0.
\end{cases}
\end{align*}
\end{theorem}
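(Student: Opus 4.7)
The plan is to replay the second-variation argument of Theorem~\ref{comparison laplacian} with a test deformation that is nonzero (and distinguished) at both endpoints of the minimizing geodesic. Fix the unique unit speed geodesic $\gamma:[0,r]\to M$ from $p$ to $q$, and let $X_1,\dots,X_n$ be the skewed $\nabla$-parallel horizontal frame along $\gamma$ with $X_i(0)=v_i$, so that by construction $X_i(r)=\tilde v_i$. Define
\[
\mathfrak{c}_K(t)=\begin{cases}
\cos\bigl(\sqrt{K/n}\,(t-r/2)\bigr)/\cos\bigl(\sqrt{K/n}\,r/2\bigr), & K>0,\\[2pt]
1, & K=0,\\[2pt]
\cosh\bigl(\sqrt{|K|/n}\,(t-r/2)\bigr)/\cosh\bigl(\sqrt{|K|/n}\,r/2\bigr), & K<0,
\end{cases}
\]
and use the horizontal vector field $Y_i(t)=\mathfrak{c}_K(t)X_i(t)$ as test deformation. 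Three facts make this choice convenient: $Y_i(0)=v_i$, $Y_i(r)=\tilde v_i$, and the defining equation of $X_i$ gives the clean identity
\[
\nabla_{\dot\gamma}Y_i+\tfrac12(J_{\dot\gamma}Y_i)_{\mathcal H}=\mathfrak{c}_K'(t)\,X_i(t).
\]

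The first step is to convert the coupled Laplacian into a sum of index forms. For each $i$, I would construct a smooth variation $\alpha^i_s$ of $\gamma$ whose variation field along $\gamma$ is $Y_i$ and whose endpoint curves $s\mapsto\alpha^i_s(0)$, $s\mapsto\alpha^i_s(r)$ are Riemannian geodesics with initial velocities $v_i$ and $\tilde v_i$ respectively. Since $L(\alpha^i_s)\ge d(\alpha^i_s(0),\alpha^i_s(r))$ with equality at $s=0$, differentiating twice yields
\[
\mathrm{Hess}^{M\times M}d\bigl((v_i,\tilde v_i),(v_i,\tilde v_i)\bigr)\;\le\; L''(0)\;\le\; I(\gamma,Y_i),
\]
where the geodesic choice of endpoint curves kills the boundary term in the second-variation formula, and the last inequality uses the elementary observation that removing the tangential component of $Y_i$ only lowers the index form (the tangential part contributes a nonnegative kinetic term). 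Summing over $i$, the left-hand side assembles to $\Delta^1_{\mathcal H}d+\Delta^2_{\mathcal H}d+2\sum_i v_i^1\tilde v_i^2 d=\Delta^c_{\mathcal H,\mathcal H}d(p,q)$, where one uses that the horizontal trace of the Levi-Civita Hessian coincides with that of the $\nabla$-Hessian.

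The second step is the computation of $\sum_i I(\gamma,Y_i)$. Thanks to the identity for $\nabla_{\dot\gamma}Y_i+\tfrac12(J_{\dot\gamma}Y_i)_{\mathcal H}$ above, the bookkeeping of the proof of Theorem~\ref{comparison laplacian} goes through essentially verbatim with $\mathfrak{s}_K(t)/\mathfrak{s}_K(r)$ replaced by $\mathfrak{c}_K(t)$: the tensorial contributions reassemble into $\mathfrak{c}_K(t)^2\mathfrak{R}(\dot\gamma,\dot\gamma)$, and the hypothesis on $\mathfrak R$ yields
\[
\sum_{i=1}^n I(\gamma,Y_i)\;\le\;\int_0^r\bigl[n\,\mathfrak{c}_K'(t)^2-K\,\mathfrak{c}_K(t)^2\bigr]\,dt.
\]
Since $\mathfrak{c}_K$ was chosen precisely to satisfy $n\mathfrak{c}_K''+K\mathfrak{c}_K=0$, one integration by parts reduces the right-hand side to the boundary expression $n\bigl[\mathfrak{c}_K(r)\mathfrak{c}_K'(r)-\mathfrak{c}_K(0)\mathfrak{c}_K'(0)\bigr]$, and a short evaluation produces the $\tan$, $0$, or $\tanh$ bound in the statement.

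The main obstacle is the coupling step: one must argue carefully that the second variation of arc length along a variation with the specified moving endpoints really does control the full $M\times M$-Hessian of distance, including the cross term $2\sum_i v_i^1\tilde v_i^2 d$ in the definition of $\Delta^c_{\mathcal H,\mathcal H}$. Once this identification is in place, everything else is a clean repetition of the proof of Theorem~\ref{comparison laplacian} with a symmetric-about-the-midpoint test function $\mathfrak{c}_K$ in place of the Jacobi-type test $\mathfrak{s}_K/\mathfrak{s}_K(r)$.
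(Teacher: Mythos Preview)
Your proposal is correct and follows essentially the same route as the paper. In particular, your symmetric midpoint profile $\mathfrak c_K(t)=\cos\bigl(\sqrt{K/n}\,(t-r/2)\bigr)/\cos\bigl(\sqrt{K/n}\,r/2\bigr)$ is exactly the paper's function $\cos(\omega t)+\dfrac{1-\cos(\omega r)}{\sin(\omega r)}\sin(\omega t)$ rewritten via the half-angle identity $\tan(\omega r/2)=\dfrac{1-\cos(\omega r)}{\sin(\omega r)}$; your handling of the coupling step (geodesic endpoint curves plus $L\ge d$ plus index lemma) is a slightly more explicit version of what the paper compresses into ``from the second variation formula and the index lemma''.
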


\begin{proof}
Consider geodesic curves $\sigma_i:(-\varepsilon,\varepsilon) \to M$, $\tilde{\sigma}_i:(-\varepsilon, \varepsilon) \to M$ such that
\[
\sigma_i(0)=p, \,  \dot{\sigma}_i (0)=v_i
\]
and
\[
\tilde{\sigma}_i(0)=q, \,  \dot{\tilde{\sigma}}_i (0)=\tilde{v}_i.
\]
We have from the chain rule
\[
\Delta^c_{\Ho,\Ho} d (p,q) =\sum_{i=1}^{n} \left( \frac{d^2}{ds^2} \right)_{ s=0}  d( \sigma_i(s), \tilde{\sigma}_i(s)).
\]
Therefore, from the second variation formula \cite[Theorem 10.12]{Lee} and the index lemma one deduces that
\[
\Delta^c_{\Ho,\Ho} d (p,q) \le  \sum_{i=1}^n I(\gamma,Y_i)
\]
where the $Y_i$'s are arbitrary vector fields along $\gamma$ such that
\[
Y_i(0)=v_i, \, \, Y_i(r)=\tilde{v}_i.
\]
Pick
\[
Y_i(t)=\mathfrak{c}_K (t) X_i(t),
\]
where
\begin{align*}
\mathfrak{c}_K(t)= 
\begin{cases}
 \cos \left(\sqrt { \frac{K}{n}} t \right)  +\frac{1-\cos \left(\sqrt { \frac{K}{n}} t \right)}{\sin \left(\sqrt { \frac{K}{n}} r \right)}\sin \left(\sqrt { \frac{K}{n}} t \right)&\text{if}\ K>0,
\\
\displaystyle 1 &\text{if}\ K = 0,
\\
 \cosh \left(\sqrt { \frac{K}{n}} t \right)  +\frac{1-\cosh \left(\sqrt { \frac{K}{n}} t \right)}{\sinh \left(\sqrt { \frac{K}{n}} r \right)}\sinh \left(\sqrt { \frac{K}{n}} t \right) &\text{if}\ K<0.
\end{cases}
\end{align*}
By a computation similar to the one in the proof of Theorem \ref{comparison laplacian}, we obtain
\begin{align*}
\sum_{i=1}^n I(\gamma,Y_i) \le \int_0^r n  \mathfrak{c}'_K(t)^2- K \mathfrak{c}_K(t)^2  dt
\end{align*}
and the conclusion follows from evaluating the above integral.
\end{proof}

\subsection{A Bonnet-Myers type theorem}

A classical application of Laplacian comparison is the Bonnet-Myers. In our setting we get the following result.

\begin{corollary}[Bonnet-Myers type theorem]\label{BMyers}
Assume that there exists $K >0$ such that for every $U \in \mathfrak{X}(TM)$
\[
\mathfrak{R}(U,U) \ge K |U|^2,
\] 
then $M$ is compact and 
\[
\mathbf{diam} (M ) \le \pi \sqrt{ \frac{n}{K}}.
\]
\end{corollary}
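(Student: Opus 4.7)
The plan is to argue by contradiction using the horizontal Laplacian comparison Theorem \ref{comparison laplacian}, mimicking the classical Bonnet--Myers proof from Riemannian geometry. Suppose there exist $p,q \in M$ with $d(p,q) > \pi\sqrt{n/K}$. Since the Riemannian metric $g$ is complete, Hopf--Rinow supplies a minimizing unit-speed geodesic $\gamma:[0,L] \to M$ joining $p$ to $q$, where $L = d(p,q) > \pi\sqrt{n/K}$. A standard fact about the cut locus is that for every $t \in (0,L)$, the point $\gamma(t)$ is not a cut point of $p$: otherwise $\gamma$ would fail to be minimizing past $\gamma(t)$, contradicting its global minimality.

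Next, I would apply Theorem \ref{comparison laplacian} to $x = \gamma(t)$ for $t \in (0,L)$. Since $r_p(\gamma(t)) = t$, this gives
\[
\Delta_\Ho r_p(\gamma(t)) \;\le\; \sqrt{nK}\,\cot\!\left(\sqrt{K/n}\,t\right), \qquad t \in (0,L).
\]
Because $\pi\sqrt{n/K} < L$, one may let $t \to \pi\sqrt{n/K}^-$ while keeping $\gamma(t)$ strictly inside $(0,L)$, hence outside the cut locus of $p$. The right-hand side then tends to $-\infty$, whereas $\Delta_\Ho r_p$ is smooth (and in particular finite) on the complement of $\{p\} \cup \mathrm{Cut}(p)$, so it stays bounded along the compact subarc $\gamma\bigl([\epsilon, \pi\sqrt{n/K}]\bigr)$ for any small $\epsilon>0$. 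This contradiction forces $d(p,q) \le \pi\sqrt{n/K}$ for all pairs $p,q$, hence $\mathrm{diam}(M) \le \pi\sqrt{n/K}$. Compactness of $M$ then follows from completeness and the boundedness of the diameter via Hopf--Rinow.

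The only conceptual subtlety is ensuring that the point $\gamma(t)$ at which the comparison is invoked really lies in the regular set of $r_p$ all the way up to $t \to \pi\sqrt{n/K}$; this is automatic from the minimality of $\gamma$ on $[0,L]$, so there is no genuine obstacle here. As an alternative route, one could equally well apply the coupled bound of Theorem \ref{comparison coupled laplacian} to $d(\cdot,\cdot)$ and let $d(p,q) \to \pi\sqrt{n/K}$, where $-2n\sqrt{K}\tan(\sqrt{K/n}\,d(p,q)/2)$ blows down to $-\infty$; I would prefer the single-point version above since it is a direct one-line application of the $K>0$ branch of Theorem \ref{comparison laplacian}.
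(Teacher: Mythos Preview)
Your proof is correct and follows essentially the same idea as the paper: both use the $K>0$ branch of Theorem~\ref{comparison laplacian} and exploit the fact that $\sqrt{nK}\cot(\sqrt{K/n}\,r)\to-\infty$ as $r\to\pi\sqrt{n/K}$. The only cosmetic difference is packaging: the paper phrases the key step as ``by Calabi's lemma any point at distance $\ge \pi\sqrt{n/K}$ lies in the cut locus'' and then closes with a density argument (approximating an arbitrary $x$ by points outside $\Cut(p)$), whereas you run a direct contradiction along a minimizing geodesic, using that interior points of a minimizing segment are never cut points---which is precisely the content of the Calabi step and makes the density argument unnecessary.
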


\begin{proof}
Let $p \in M$. From Theorem \ref{comparison laplacian}, one has for $x\neq p $, not in the cut-locus of $p$
\begin{equation*}
\Delta_{\mathcal{H}} r_{p}(x) \le \sqrt {nK} \cot \left(\sqrt {\frac{K}{n}} r_{p}(x)\right).
\end{equation*}
We deduce from Calabi's lemma that any point $x$ such that $d (p,x) \ge \pi \sqrt{ \frac{n}{K}}$ has to be in the cut-locus of $p$.
Let now $x \in \M$ arbitrary. If $x$ is not in the cut-locus of $p$, then $d(p,x) < \pi \sqrt{ \frac{n}{K}}$. If $x$ is in the cut-locus of $p$ then for every $\eta >0$ there is at least one point $y$ in the open ball with center $x$ and radius $\eta$ such that $y$ is not in the cut-locus of $p$. Thus $d(p , x)\le \pi \sqrt{ \frac{n}{K}} +\eta$. We conclude $d(p , x)\le \pi \sqrt{ \frac{n}{K}}$ since $\eta$ is arbitrary.
\end{proof}

\section{Radial processes, parallel couplings and  applications to the horizontal semigroup}

Let $( ( \xi_t )_{t \ge 0} , ( \mathbb P_x )_{x \in M} )$ be the subelliptic
diffusion process generated by $\Delta_{\ch}$ and let $\zeta$ denote its
lifetime. We will refer to~$\xi$
as the horizontal Brownian motion of the foliation or as the
sub-Riemannian Brownian motion. Note that our horizontal Brownian motion is normalized to have $\Delta_{\ch}$ as its generator, rather than $\frac{1}{2}\Delta_{\ch}$.  By the hypoellipticity of $\Delta_{\ch}$, $\xi$ admits a smooth heat
kernel $p_t (x,y)$. The horizontal heat semigroup with generator $\Delta_{\ch}$ is given by
\[
P_t f(x)=\int_M p_t (x,y) f(y) d\mu(y)=\mathbb{E}_x \left( f(\xi_t) 1_{t < \zeta} \right) 
\]
where $f: M\to \mathbb{R}$ is a bounded Borel function and we recall that $\mu$ is the Riemannian volume measure.

\subsection{Radial part of the horizontal Brownian motion and stochastic completeness}
The following theorem was proved in \cite{radial}.

\begin{theorem} \label{th:Ito-radial} 

Take $p \in M$ and as before set $r_p (x) = d ( p , x )$. For each $x_1 \in M$, if $\xi_0 = x_1$, then there
  exists a non-decreasing continuous process $l_t$ which increases only
  when $\xi_t$ is in the cut-locus of $p$ and a martingale $\be_t$ on $\mathbb R$ with quadratic variation satisfying $d\ang{\beta} \le 2 \,dt$ such that
  \begin{equation} \label{eq:Ito-radial} 
  r_p ( \xi_{ t \wedge \zeta } ) = r_p ( x_1 ) + \be_{ t \wedge \zeta} + \int_0^{ t \wedge \zeta }
    \Delta_{\ch} r_p ( \xi_s ) ds - l_{ t \wedge \zeta}
  \end{equation}
  holds $\mathbb P_{x_1}$-almost surely.
\end{theorem}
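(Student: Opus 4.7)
The plan is to follow the Kendall--Barden--Le strategy adapted to the subelliptic setting, as executed in \cite{radial}. The argument splits into an Itô formula on the smooth locus of $r_p$ and a control of the contribution coming from the cut locus.

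First, on $M\setminus(\{p\}\cup\Cut(p))$ the distance function $r_p$ is smooth with $|\nabla r_p|=1$. Writing the horizontal Brownian motion via a local orthonormal horizontal frame $X_1,\dots,X_n$ as the Stratonovich SDE $d\xi_t=\sqrt{2}\sum_{i=1}^n X_i(\xi_t)\circ dB^i_t+b(\xi_t)\,dt$ associated to the generator $\Delta_{\ch}$, Itô's formula yields, on the regular set,
\[
dr_p(\xi_t)=\sqrt{2}\sum_{i=1}^n (X_i r_p)(\xi_t)\,dB^i_t+\Delta_{\ch} r_p(\xi_t)\,dt.
\]
The martingale part $\be_t$ then has quadratic variation $2\int_0^t |\nabla_{\ch} r_p(\xi_s)|^2\,ds\le 2t$, since $|\nabla_{\ch} r_p|\le |\nabla r_p|=1$ on the smooth locus; the factor $2$ reflects the normalization of $\Delta_{\ch}$ as full generator rather than half-generator.

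To extend the decomposition across $\Cut(p)$, the idea is to use local smooth upper majorants: for each $q\notin\Cut(p)\cup\{p\}$, the unique minimizing geodesic from $p$ to $q$ provides a smooth function $\varphi_q\ge r_p$ defined near $q$ with $\varphi_q(q)=r_p(q)$. Applying Itô's formula to these majorants and piecing the inequalities together produces a global semimartingale decomposition of $r_p(\xi_{t\wedge\zeta})$ of the form $r_p(x_1)+\be_{t\wedge\zeta}+\int_0^{t\wedge\zeta}\Delta_{\ch} r_p(\xi_s)\,ds-l_{t\wedge\zeta}$, with
\[
l_{t\wedge\zeta}=r_p(x_1)+\be_{t\wedge\zeta}+\int_0^{t\wedge\zeta}\Delta_{\ch} r_p(\xi_s)\,ds-r_p(\xi_{t\wedge\zeta})
\]
a continuous non-decreasing process vanishing at $t=0$. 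Since $\varphi_q$ touches $r_p$ to second order along the geodesic direction on the regular set, $l_t$ remains locally constant whenever $\xi_t\notin\Cut(p)$, so it grows only on the cut locus.

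The principal obstacle is the justification of this approximation at the cut locus: although $\Cut(p)$ has $\mu$-measure zero, one must verify that $\xi_t$ spends zero Lebesgue time there in order for $\int_0^t \Delta_{\ch} r_p(\xi_s)\,ds$ to be well-defined, and that Itô's formula survives passage through it in the appropriate distributional sense. The Riemannian case is classical work of Kendall and Barden--Le; the subelliptic refinement, where $|\nabla_{\ch} r_p|\le 1$ is only an inequality and the driving diffusion has degenerate covariance, is the content of \cite{radial}, to which the theorem defers.
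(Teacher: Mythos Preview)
The paper does not prove this theorem at all: it simply states that the result was proved in \cite{radial} and moves on. Your sketch of the Kendall--Barden--Le strategy adapted to the subelliptic setting, together with your explicit deferral to \cite{radial} for the cut-locus analysis, is therefore entirely consistent with the paper's treatment and in fact supplies more detail than the paper does.
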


\begin{theorem}
 Assume that there exists $K \in \mathbb{R}$ such that for every $U \in \mathfrak{X}(TM)$
\[
\mathfrak{R}(U,U) \ge K |U|^2.
\] 
Then for every $x \in M$, $\mathbb{P}_x (\zeta <+\infty)=1$. In particular, the semigroup $P_t$ is stochastically complete meaning that for every $x \in M$ and $t \ge 0$
\[
P_t 1(x)=1.
\] 
\end{theorem}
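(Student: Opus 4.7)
My plan is a Lyapunov-function (Khasminskii-type) argument built on the radial SDE of Theorem \ref{th:Ito-radial} combined with the comparison bound of Theorem \ref{comparison laplacian}. The goal is to show $\mathbb{P}_x(\zeta=+\infty)=1$ for every $x\in M$ (so the statement should be read as non-explosion, which then gives $P_t 1(x)=\mathbb{E}_x[\mathbf{1}_{t<\zeta}]=1$).

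First I would reduce to the case $K\le 0$: if $K>0$ one can either invoke Corollary \ref{BMyers} directly (since $M$ is then compact, explosion is impossible), or simply use that $\mathfrak{R}(U,U)\ge K|U|^2$ implies $\mathfrak{R}(U,U)\ge -|K'|\,|U|^2$ for any $K'>0$. Assuming $K\le 0$, Theorem \ref{comparison laplacian} gives, for $x\notin\{p\}\cup\Cut(p)$,
\[
\Delta_{\mathcal{H}} r_p(x) \le \sqrt{n|K|}\,\coth\!\bigl(\sqrt{|K|/n}\,r_p(x)\bigr) \le \sqrt{n|K|}+\frac{n}{r_p(x)},
\]
using $\coth y\le 1+1/y$ for $y>0$.

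The key step is to take the Lyapunov function $v(r)=1+r^2$ and apply the one-dimensional Ito formula to the real semimartingale $r_p(\xi_{t\wedge\zeta})$ furnished by \eqref{eq:Ito-radial}. Since $v'\ge 0$, the local-time term $-\int v'(r_p)\,dl$ has the favorable sign and is discarded. Using $d\ang{\beta}\le 2\,dt$ together with the comparison estimate, the drift in $v(r_p(\xi))$ obeys
\[
v''(r_p)+v'(r_p)\,\Delta_{\mathcal{H}} r_p \le 2+2r_p\Bigl(\sqrt{n|K|}+\frac{n}{r_p}\Bigr) \le \sqrt{n|K|}\,v(r_p)+C
\]
for a constant $C=C(n,K)$. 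Introducing the exit time $\tau_R=\inf\{t\ge 0:\,r_p(\xi_t)\ge R\}$, so that $\zeta=\lim_{R\to\infty}\tau_R$, and taking expectations at $t\wedge\tau_R$, Gronwall's lemma yields $\mathbb{E}_{x}\bigl[v(r_p(\xi_{t\wedge\tau_R}))\bigr]\le M(t)$ uniformly in $R$. Markov's inequality then gives $\mathbb{P}_{x}(\tau_R\le t)\le M(t)/(1+R^2)\to 0$ as $R\to\infty$, forcing $\mathbb{P}_{x}(\zeta\le t)=0$ for every $t\ge 0$, whence $\zeta=+\infty$ almost surely.

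The main subtlety is not the calculation but the regularity of $r_p$: it is not $C^2$ at $p$ nor on the cut locus, which is precisely why Theorem \ref{th:Ito-radial} is indispensable---it realizes $r_p(\xi_\cdot)$ as a bona fide one-dimensional semimartingale with the non-decreasing process $l$ absorbing the cut-locus singularity. Once that decomposition is in hand, the argument reduces to a standard one-dimensional estimate, and the local singularity $n/r_p$ in the comparison bound is harmless because it is multiplied by $v'(r)=2r$ in the Ito drift.
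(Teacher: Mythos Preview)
Your argument is correct and complete; you also correctly noted that the conclusion should read $\mathbb{P}_x(\zeta=+\infty)=1$. The paper, however, proceeds differently: instead of a Lyapunov/Khasminskii estimate, it introduces the one-dimensional comparison process
\[
\tilde r_t = d(p,x_1) + \sqrt{n|K|}\int_0^t \coth\!\Bigl(\sqrt{|K|/n}\,\tilde r_s\Bigr)\,ds + \beta_t,
\]
invokes the Ikeda--Watanabe comparison theorem to get $r_p(\xi_t)\le \tilde r_t$ for $t<\zeta$, and then observes that $\tilde r$ cannot explode because $\coth$ is bounded at infinity and $d\langle\beta\rangle\le 2\,dt$. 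The two routes are equivalent in strength here: the paper's is conceptually cleaner (one dominates by an explicit scalar diffusion and reads off non-explosion), while yours is more self-contained---it avoids the stochastic comparison machinery entirely and, as a by-product, yields a quantitative second-moment bound $\mathbb{E}_x[r_p(\xi_t)^2]\le M(t)$ that the paper's argument does not directly provide.
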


\begin{proof}
We can assume $K<0$. As before, we fix a point $p \in \M$. For $x_1 \in \M$, we consider
the solution of the stochastic differential equation
\begin{equation} \label{tildeXi}
  \tilde{r}_t =d (p,x_1)+ \sqrt{n |K|} \int_0^t \coth\left(\sqrt{\frac{|K|}{n}} \tilde{r}_t\right)  ds+\beta_t
\end{equation}
where $\beta$ is the martingale defined in Theorem
\ref{th:Ito-radial}.  It follows from Theorem \ref{th:Ito-radial} and the Ikeda-Watanabe
  comparison theorem \cite{Ik-Wa} that for $t < \zeta$, one has
  $\mathbb P_{x_1}$ a.s.
  \[
    d (x_0,\xi_t) \le \tilde{r}_t.
  \]
  The result follows  since $\tilde{r}_t $ can not explode given it has a quadratic variation less than $2t$ and  the drift term  is controlled because $\coth (r)$ is uniformly bounded for large $r$.
  \end{proof}
  
  Using the moment estimates method in \cite{MR4091106} we can also get exit time estimates for the horizontal Brownian motion.
  
  \begin{proposition}
  Assume that there exists $K \le 0$ such that for every $U \in \mathfrak{X}(TM)$
\[
\mathfrak{R}(U,U) \ge K |U|^2.
\] 
There exist constants $c_1,c_2,c_3>0$ such that for every $x \in M$, $r \ge 0$ and $t>0$
\begin{align*}
\mathbb{P}_x \left(\sup_{s\in [0,t]} d(x,\xi_t) \ge r \right) \le 
\begin{cases}
c_1 \exp\left(-c_2 \frac{ \sqrt{|K|}}{e^{c_3\sqrt{|K|}t}-1} r^2 \right) \text{ if } K<0 \\ 
c_1 \exp\left(-c_2 \frac{r^2}{t} \right) \text{ if } K=0.
\end{cases}
\end{align*}
  \end{proposition}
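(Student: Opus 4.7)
The strategy is to reduce to a one-dimensional comparison diffusion via the radial decomposition, and then to exhibit an exponential moment bound that converts into the desired Gaussian-type tail estimate through Doob's maximal inequality.

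First, I would apply Theorem \ref{th:Ito-radial} with the base point $p=x$ and combine it with the horizontal Laplacian comparison of Theorem \ref{comparison laplacian}. Writing $b_K$ for the comparison drift ($b_K(r)=\sqrt{n|K|}\coth(\sqrt{|K|/n}\,r)$ if $K<0$, $b_K(r)=n/r$ if $K=0$), the local time term $l_t$ is non-decreasing and the drift is a continuous function of the radial coordinate, so the Ikeda--Watanabe comparison theorem (used exactly as in the proof of the preceding stochastic completeness theorem) yields $d(x,\xi_t)\le \tilde r_t$ almost surely for $t<\zeta$, where
\[
\tilde r_t=\int_0^t b_K(\tilde r_s)\,ds+\beta_t,\qquad \tilde r_0=0,\qquad d\langle\beta\rangle\le 2\,dt.
\]
Stochastic completeness (the previous theorem) gives $\zeta=\infty$ a.s., so the pathwise bound extends to all times and $\sup_{s\in[0,t]} d(x,\xi_s)\le \sup_{s\in[0,t]}\tilde r_s$.

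Second, I would establish the key exponential moment estimate $\mathbb{E}[e^{\alpha\tilde r_t^2}]\le c_1$ valid whenever $\alpha\le \alpha_0(t,K)$, where $\alpha_0(t,K)=c_2\sqrt{|K|}/(e^{c_3\sqrt{|K|}t}-1)$ for $K<0$ and $\alpha_0(t,0)=c_2/t$. The plan is to apply It\^o's formula to the Lyapunov function $F(r,t)=g(t)e^{f(t)r^2}$, with $f$ decreasing and $g$ chosen to absorb the residual drift. Using the elementary inequality $r\coth r\le 1+r$ one gets $rb_K(r)\le n+\sqrt{n|K|}\,r$, and a Young inequality reduces the supermartingale condition $\partial_t F+b_K\partial_r F+\partial_r^2 F\le 0$ to the scalar Riccati-type ODE $f'+4f^2+\sqrt{n|K|}f\le 0$, whose critical solution is $f(t)=\tfrac14\sqrt{n|K|}/(e^{\sqrt{n|K|}t}-1)$, recovering the $1/(4t)$ profile in the Euclidean limit $K\to 0$. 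Equivalently, one can proceed by polynomial moments: Itô applied to $\tilde r_t^{2k}$ yields a recursion $m_k'(t)\le A_k m_{k-1}(t)+B_k m_k(t)$ with $B_k=k\sqrt{n|K|}$ and $A_k=O(k^2)$, which after induction gives $\mathbb{E}[\tilde r_t^{2k}]\le (2k)!\,\Psi(t)^k$ with $\Psi(t)=(e^{c\sqrt{|K|}t}-1)/\sqrt{|K|}$; summing the Taylor series of $e^{\alpha r^2}$ then recovers the same exponential moment bound. This second, moment-based route is the one followed in \cite{MR4091106}.

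Third, since $\tilde r_s$ has non-negative drift it is a submartingale, and $r\mapsto e^{\alpha r^2}$ is convex and non-decreasing on $[0,\infty)$, so $e^{\alpha\tilde r_s^2}$ is a non-negative submartingale. Doob's maximal inequality then gives
\[
\mathbb{P}_x\!\left(\sup_{s\in[0,t]} d(x,\xi_s)\ge r\right)\le \mathbb{P}\!\left(\sup_{s\in[0,t]}\tilde r_s\ge r\right)=\mathbb{P}\!\left(\sup_{s\in[0,t]} e^{\alpha\tilde r_s^2}\ge e^{\alpha r^2}\right)\le e^{-\alpha r^2}\,\mathbb{E}[e^{\alpha\tilde r_t^2}].
\]
Choosing $\alpha=\alpha_0(t,K)$ and using the bound from Step~2 produces the announced estimate in both the $K<0$ and $K=0$ regimes.

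The main obstacle is Step~2: getting the precise $(t,K)$ dependence of $\alpha_0$, i.e. the factor $\sqrt{|K|}/(e^{c\sqrt{|K|}t}-1)$ that degenerates correctly to the Gaussian profile as $K\to 0$ and as $t\to 0$. A naive constant Lyapunov fails because the drift inequality always carries a non-vanishing zeroth-order term in $r$; one must either introduce the correcting prefactor $g(t)$ (handling the $t\to 0$ divergence of $\int_0^t f$ with care) or use the sharper inductive polynomial-moment argument of \cite{MR4091106} to track the geometric growth in $k$. Once this delicate dependence is pinned down, the remaining steps are routine.
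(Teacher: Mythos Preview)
Your proposal is correct and follows precisely the route the paper intends: the paper's own proof is a single line deferring to the moment estimates method of \cite{MR4091106}, and your three-step plan (radial comparison via Theorems \ref{th:Ito-radial} and \ref{comparison laplacian} plus Ikeda--Watanabe, then exponential/polynomial moment bounds as in \cite{MR4091106}, then Doob's maximal inequality) is exactly an unpacking of that reference. Your identification of the delicate point---tracking the $(t,K)$ dependence so that the profile degenerates correctly---is also accurate, and the polynomial-moment recursion you sketch is indeed how \cite{MR4091106} handles it.
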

  
  \begin{proof}
  Proceed as in Theorem 1.6 in \cite{MR4091106}; the context is a little different there but all arguments go through.
  \end{proof}
  
  Note that the stochastic completeness and exit time estimate yields on diagonal heat kernel estimates.   

  \begin{corollary}
Assume that there exists $K \le 0$ such that for every $U \in \mathfrak{X}(TM)$
\[
\mathfrak{R}(U,U) \ge K |U|^2.
\] 
Then, there exist  constants $c_1,c_2,c_3>0$ such that for every $x \in M$ and $t>0$
\begin{align*}
p_t(x,x) \ge
\begin{cases}
 \frac{c_1}{\mu(B(x ,c_2\sqrt{t}))}, \text{ if } K=0, \\
 \frac{c_1}{\mu\left(B\left(x ,c_2\left(\frac{e^{c_3\sqrt{|K|}t}-1}{ \sqrt{|K|}} \right)^{1/2}\right)\right)}, \text{ if } K<0.
 \end{cases}
\end{align*}
\end{corollary}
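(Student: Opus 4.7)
The plan is to derive the on-diagonal lower bound from stochastic completeness combined with the exit-time estimate via the standard Cauchy--Schwarz/semigroup argument. The key identity is that, by the Chapman--Kolmogorov relation and symmetry of the heat kernel,
\[
p_{2t}(x,x)=\int_M p_t(x,y)^2\,d\mu(y).
\]
Applied to the indicator of a ball $B(x,r)$, the Cauchy--Schwarz inequality gives
\[
\mathbb{P}_x(\xi_t\in B(x,r))=\int_{B(x,r)} p_t(x,y)\,d\mu(y)\le \mu(B(x,r))^{1/2}\,p_{2t}(x,x)^{1/2},
\]
so
\[
p_{2t}(x,x)\ge \frac{\mathbb{P}_x(\xi_t\in B(x,r))^2}{\mu(B(x,r))}.
\]

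First, I would use stochastic completeness, proved in the previous theorem, to ensure that $\mathbb{P}_x(\xi_t\in M)=1$ for all $t>0$, so the above probability makes sense without mass loss at infinity. Second, I would invoke the exit-time estimate of the previous proposition to choose $r$ as a function of $t$ that forces the Brownian motion to stay in $B(x,r)$ with probability at least $1/2$, say. Concretely, in the case $K=0$ I would pick $r=c_2'\sqrt{t}$ with $c_2'$ large enough so that $c_1\exp(-c_2 (c_2')^2)\le 1/2$; in the case $K<0$ I would pick
\[
r=c_2'\left(\frac{e^{c_3\sqrt{|K|}t}-1}{\sqrt{|K|}}\right)^{1/2}
\]
with $c_2'$ large enough so that the corresponding exponential in the exit-time bound is at most $1/2$. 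In each case one obtains $\mathbb{P}_x(\sup_{s\le t}d(x,\xi_s)<r)\ge 1/2$, hence $\mathbb{P}_x(\xi_t\in B(x,r))\ge 1/2$.

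Plugging this into the Cauchy--Schwarz lower bound and then rescaling $t\mapsto t/2$ (which only affects the constants $c_2,c_3$) yields exactly the claimed inequalities with suitably adjusted constants $c_1,c_2,c_3>0$. I do not expect any serious obstacle here: stochastic completeness and the Gaussian-type tail of the exit distribution are both already in hand, and the remaining argument is the textbook ultracontractivity-free lower bound. The only mild technical point is to verify that the radial decomposition giving the exit time estimate applies to $\sup_{s\in[0,t]} d(x,\xi_s)$ and not merely to $d(x,\xi_t)$, which is already built into the statement of the preceding proposition.
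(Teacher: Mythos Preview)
Your proposal is correct and follows essentially the same route as the paper: stochastic completeness plus the exit-time estimate to lower-bound $\mathbb{P}_x(\xi_t\in B(x,r))$, then Cauchy--Schwarz together with $p_{2t}(x,x)=\int_M p_t(x,y)^2\,d\mu(y)$, and finally the choice of $r$ as the appropriate function of $t$. The paper phrases the same steps in integral form rather than probabilistic language, but the argument is identical.
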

\begin{proof}
For simplicity, we show the argument in the case $K=0$, the case $K<0$ being identical. For $r >0$ and $x \in M$ denote $B(x,r)$ the open ball with center $x$ and radius $r$. We have then
\begin{align*}
\int_{M \setminus B(x,r)} p_t (x,y) d\mu(y) &=\mathbb{P}_x ( \xi_t \notin B(x,r)) \\
 &\le \mathbb{P}_x \left(\sup_{s\in [0,t]} d(x,\xi_t) \ge r \right) \\
 &\le c_1 \exp\left(-c_2 \frac{r^2}{t} \right)
\end{align*}
Then, from stochastic completeness we have
\begin{align*}
 \int_{ B(x,r)} p_t (x,y) d\mu(y)=1-\int_{M \setminus B(x,r)} p_t (x,y) d\mu(y)\ge 1- c_1 \exp\left(-c_2 \frac{r^2}{t} \right).
\end{align*}
  Apply Cauchy-Schwarz inequality
  \begin{align*}
 \left( \int_{ B(x,r)} p_t (x,y) d\mu(y)\right)^2 & \le \mu \left( B(x,r) \right) \int_{ B(x,r)} p_t (x,y)^2 d\mu(y) \\
   & \le  \mu \left( B(x,r) \right) \int_{ M} p_t (x,y)^2 d\mu(y) \\
   &= \mu \left( B(x,r) \right) p_{2t}(x,x).
  \end{align*}
  Therefore we obtained
  \[
  p_{2t}(x,x) \ge \left(1- c_1 \exp\left(-c_2 \frac{r^2}{t} \right)\right)\frac{1}{\mu \left( B(x,r) \right)}.
  \]
  Choosing $r=A\sqrt{t}$ with $A$ large enough yields the conclusion.
  \end{proof}
  
  \subsection{Lipschitz bound on the heat semigroup and coupling by parallel transport}
  
  Given  a Lipschitz function $f : M \to \mathbb{R}$ we denote
  \[
  \mathrm{Lip} (f)=\sup_{x,y \in M, x \neq y}  \frac{|f(x)-f(y)|}{d(x,y)}.
  \]
  and $\mathbf{Lip}_b (M)$ the set of bounded Lipschitz functions on $M$.
\begin{theorem}
 Assume that there exists $K \in \mathbb{R}$ such that for every $U \in \mathfrak{X}(TM)$
\[
\mathfrak{R}(U,U) \ge K |U|^2.
\] 
Then for every $f \in \mathbf{Lip}_b (M)$ and $t \ge 0$, $P_tf \in \mathbf{Lip}_b (M)$ and we have
\[
\mathrm{Lip} (P_t f)\le e^{-Kt} \mathrm{Lip} ( f).
\] 
\end{theorem}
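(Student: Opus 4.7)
My plan is to prove the contraction estimate by constructing a parallel-transport coupling of two horizontal Brownian motions and then estimating the expected distance between them via Theorem \ref{comparison coupled laplacian}. Concretely, given $p,q \in M$, I would construct a two-point process $(\xi_t, \tilde{\xi}_t)$ on $M \times M$ starting at $(p,q)$ whose marginals are each horizontal Brownian motions and whose generator, on smooth functions near $\mathcal{C}$, coincides with the coupled operator $\Delta^c_{\Ho,\Ho}$ defined in \eqref{coupled laplacian}. The coupling is produced by driving $\xi_t$ by an orthonormal horizontal frame $v_i$ at $\xi_t$ and driving $\tilde{\xi}_t$ by the skewed $\nabla$-parallel transported frame $\tilde v_i$ at $\tilde{\xi}_t$ along the unique minimizing geodesic joining the two current positions, fed by one common $n$-dimensional Brownian motion. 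Each marginal remains a horizontal Brownian motion by the orthonormality of either frame.

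The key analytic input is that, combining Theorem \ref{comparison coupled laplacian} with the elementary inequalities $\tan x \ge x$ on $[0,\pi/2)$ and $\tanh x \le x$ on $[0,\infty)$, in all three sign cases one has
\[
\Delta^c_{\Ho,\Ho} d(p,q) \le -K\, d(p,q), \qquad (p,q) \in \mathcal{C}.
\]
Next, an Itô-type formula, analogous to Theorem \ref{th:Ito-radial} but applied on the product $M\times M$, yields for the radial process $\rho_t := d(\xi_t, \tilde{\xi}_t)$ a decomposition
\[
\rho_t = d(p,q) + \beta_t + \int_0^t \Delta^c_{\Ho,\Ho} d(\xi_s, \tilde{\xi}_s)\, ds - L_t,
\]
with $\beta_t$ a continuous local martingale of bounded quadratic variation and $L_t$ a non-decreasing process supported on the coupled cut-locus. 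Taking expectations (the local-time term contributes with the favorable sign) gives $\mathbb{E}[\rho_t] \le d(p,q) - K \int_0^t \mathbb{E}[\rho_s]\, ds$, and Gronwall's lemma then yields $\mathbb{E}[\rho_t] \le e^{-Kt} d(p,q)$. Since the marginals are horizontal Brownian motions,
\[
|P_t f(p) - P_t f(q)| \le \mathbb{E}\bigl[|f(\xi_t) - f(\tilde{\xi}_t)|\bigr] \le \mathrm{Lip}(f)\, \mathbb{E}[\rho_t] \le e^{-Kt} \mathrm{Lip}(f)\, d(p,q),
\]
which proves the claim by the arbitrariness of $p$ and $q$.

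The main obstacle is the rigorous construction of the coupled process together with the justification of the Itô formula for the non-smooth function $d$ on $M \times M$: the pair $(\xi_t, \tilde{\xi}_t)$ has to be extended across the diagonal and the coupled cut-locus, where the skewed parallel frame is no longer canonically defined. The standard remedies are either to approximate $d$ from above by smooth functions and pass to the limit, as in the proof of Theorem \ref{th:Ito-radial}, or to invoke a Kendall-type localization outside the cut-locus; in both approaches the singular correction term $-L_t$ contributes with the sign favorable to our inequality, so the non-smoothness of $d$ is a purely technical, not a substantive, difficulty. Finally, the stochastic completeness established just above ensures that neither marginal explodes in finite time, so the Gronwall step is valid on all of $[0,\infty)$.
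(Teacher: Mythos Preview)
Your proposal is correct and follows essentially the same route as the paper: a skewed $\nabla$-parallel coupling of two horizontal Brownian motions so that the pair has generator $\Delta^c_{\Ho,\Ho}$, the bound $\Delta^c_{\Ho,\Ho} d \le -K d$ from Theorem~\ref{comparison coupled laplacian} via the elementary $\tan$/$\tanh$ estimates, and then a Gronwall argument on $\mathbb{E}[d(\xi_t,\tilde\xi_t)]$. The only cosmetic difference is that you carry an explicit local-time term $-L_t$ from the cut-locus in the It\^o decomposition, whereas the paper assumes for simplicity that the coupled process avoids the cut-locus and defers to the Wang/Kuwada machinery for the general case; both treatments lead to the same inequality since, as you note, the singular term has the favorable sign.
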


\begin{proof}
The proof relies on a coupling by parallel transport. Define the coupling map as follows. Recall the definition of $\mathcal C$ given by \eqref{bi cut-locus}. For $p,q \in \mathcal{C}$ define the transport map $\mathfrak{P}_{p \to q}:T_pM \to T_qM$ given for $v \in T_pM$ by $\mathfrak{P}_{p \to q}(v)=X(r)$ where $X$ is the horizontal vector field along the unit speed geodesic $\gamma:[0,r] \to M$ with endpoints $p,q$ such that
\begin{align*}
\begin{cases}
\nabla_{\dot \gamma}X+\frac{1}{2} (J_{\dot \gamma} X)_\mathcal{H}=0 \\
X(0)=v.
\end{cases}
\end{align*}
It follows from the computation \eqref{skew connection} that $\mathfrak{P}_{p \to q}$ is an isometry $\mathcal{H}_p \to \mathcal{H}_q$. 
Consider now a horizontal Brownian motion $\xi_t$ started from $p \in M$. For $q \in M$ such that $(p,q) \in \mathcal{C}$ we can construct a process $\tilde{\xi}_t$ that solves the stochastic differential equation
\begin{align*}
\begin{cases}
d\tilde{\xi}_t=\mathfrak{P}_{p \to q} \circ d\xi_t \\
\tilde{\xi}_0=q.
\end{cases}
\end{align*}
This process $\tilde{\xi}$ is well defined up to the random time $\tau_{p,q} =\inf \{ t \ge 0 | (\xi_t,\tilde{\xi}_t) \in \mathcal{C} \}$. For simplicity of the argument, we assume in the following that $\tau_{p,q}=+\infty$ a.s. If not, the technical difficulty can be overcome using the known and now classical techniques to extend  couplings \textit{beyond the cut-locus}, see Wang  \cite{wang-coupling} \& \cite[Theorem 2.3.2]{MR3154951} and Kuwada \cite{Kuwada-coupling}. Note that in such constructions the two processes become identical in the case they meet.

 By construction our process $(\xi_t,\tilde{\xi}_t)$ is then a diffusion process with generator $\Delta^c_{\Ho,\Ho}$ given by \eqref{coupled laplacian}.  When applying It\^o's formula, one sees that the local martingale part of $d(\xi_t,\tilde{\xi}_t)$ is the same as 
\begin{align}\label{quadratic part}
\int_0^t \left\langle \nabla^1 d(\xi_t,\tilde{\xi}_t), \circ d\xi_t \right\rangle +\left\langle \nabla^2 d(\xi_t,\tilde{\xi}_t), \circ d\tilde{\xi}_t \right\rangle
\end{align}
where $\nabla^1$ is the gradient with respect to the first variable and $\nabla^2$ the gradient with respect to the second variable. Since $| \nabla^1 d|=| \nabla^2 d|=1$ and both $\xi$ and $\tilde{\xi}$ are horizontal Brownian motions, the quadratic variation of \eqref{quadratic part} is bounded above by $8t$. Thus \eqref{quadratic part} is a martingale. One deduces
\[
\mathbb{E}(d(\xi_t,\tilde{\xi}_t))\le d(p,q)+\int_0^t \Delta^c_{\Ho,\Ho} d (\xi_s,\tilde{\xi}_s)ds
\]
Thanks to Theorem \ref{comparison coupled laplacian} and trivial estimates on the functions $\tan$ or $\tanh$ we therefore obtain
\[
\mathbb{E}(d(\xi_t,\tilde{\xi}_t))\le d(p,q)-K\int_0^t  d (\xi_s,\tilde{\xi}_s)ds.
\]
This gives
\[
\mathbb{E}(d(\xi_t,\tilde{\xi}_t)) \le  d(p,q) e^{-Kt}.
\]
Let now $f \in \mathbf{Lip}_b (M)$. One has
\begin{align*}
|P_tf(q)-P_tf(p)|&= \left| \mathbb{E} \left( f(\tilde{\xi}_t) \right)- \mathbb{E}\left(f(\xi_t)\right) \right| \\
 &\le \mathbb{E} \left( \left| f(\tilde{\xi}_t)- f(\xi_t)\right| \right) \\
 &\le \mathrm{Lip} ( f) \mathbb{E} \left(d(\xi_t,\tilde{\xi}_t) \right) \\
 &\le \mathrm{Lip} ( f) d(p,q) e^{-Kt}.
\end{align*} 
Since $\mathcal C$ is dense in $M \times M$ and $P_tf$ continuous by hypoellipticity of $\Delta_\Ho$ we conclude that for every $p,q \in M$,
\[
|P_tf(q)-P_tf(p)| \le \mathrm{Lip} ( f)  e^{-Kt}d(p,q).
\]
\end{proof}

\begin{remark}
Estimates of the type $\mathrm{Lip} (P_t f)\le e^{-Kt} \mathrm{Lip} ( f)$  play an important role in the theory of generalized curvature dimension inequalities \cite{BBG14,BG170}.
\end{remark}

\begin{example}
Since any Carnot group satisfies $\mathfrak{R}(U,U) \ge K |U|^2$ for some $K \le 0$, we deduce the  estimate $\mathrm{Lip} (P_t f)\le e^{-Kt} \mathrm{Lip} ( f)$.
\end{example}

Interestingly, in the case of Riemannian foliations with totally geodesic leaves one can obtain stronger gradient bounds for the heat semigroups under slightly different assumptions.

\begin{theorem}\label{GB total}
 Assume that the foliation is totally geodesic and that there exists $K \in \mathbb{R}$ such that for every $U \in \mathfrak{X}(TM)$
\[
\mathfrak{R}(U,U) -\frac{1}{4} (J_U,J_U)_\Ho \ge K |U|^2.
\] 
Then for every $f \in \mathbf{Lip}_b (M) \cap C^2(M)$ and $t \ge 0$, $P_tf \in \mathbf{Lip}_b (M)$ and we have
\[
|\nabla P_t f|(x)\le e^{-Kt} P_t |\nabla f| (x), \quad x \in M.
\] 
\end{theorem}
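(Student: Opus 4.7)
The strategy is to derive a Bismut-type pointwise representation of $\nabla P_t f(x)$ via a damped $\nabla$-stochastic parallel transport along the horizontal Brownian motion $\xi_t$ starting at $x$. The totally geodesic hypothesis is essential: it guarantees both that $\nabla$-parallel transport preserves the splitting $TM = \Ho \oplus \V$ (since $\nabla$ preserves each sub-bundle) and that $\Tor^\nabla(\Ho, \V)=0$, which is precisely what makes the tensor $\mathfrak R - \tfrac 14 J^2$ emerge with the exact constant $\tfrac 14$ as the Ricci-like operator controlling the derivative flow.

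First I would construct the $\nabla$-stochastic parallel transport $/\!/_s : T_xM \to T_{\xi_s}M$ along $\xi_s$, which is a pathwise linear isometry commuting with the horizontal/vertical decomposition. Letting $\mathcal M_y \in \mathrm{End}(T_yM)$ denote the symmetric endomorphism associated with the bilinear form
\[
(U,V)\mapsto \tfrac 12 \bigl(\mathfrak R(U,V)+\mathfrak R(V,U)\bigr) - \tfrac 14 (J_U, J_V)_\Ho ,
\]
I would define, for a fixed $v \in T_xM$, the damped transport $U_s = /\!/_s Q_s v$, where $Q_s: T_xM \to T_xM$ is the pathwise ODE solution
\[
\frac{dQ_s}{ds} = - /\!/_s^{-1} \mathcal M_{\xi_s} /\!/_s \, Q_s , \qquad Q_0 = \mathrm{Id}.
\]
The curvature hypothesis forces $\mathcal M_y \succeq K \cdot \mathrm{Id}$, so $\tfrac{d}{ds}|Q_s v|^2 \le -2K|Q_s v|^2$ and therefore $|U_s| \le e^{-Ks}|v|$ pathwise.

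Next, applying It\^o's formula to the scalar process $N_s = \langle U_s, \nabla P_{t-s} f(\xi_s)\rangle_{\xi_s}$ on $[0,t]$, the drift splits into three contributions: the backward equation $\partial_s P_{t-s} f = -\Delta_\Ho P_{t-s} f$ produces a $-\nabla \Delta_\Ho P_{t-s} f$ term, the second-order spatial term traced against the horizontal quadratic variation produces a Hessian contribution, and $dQ_s$ contributes $-\mathcal M_{\xi_s} U_s$. A Weitzenb\"ock-type identity for $\nabla$ acting on sections of $TM$, obtained by specializing the computations of Section 2.3 and the corollary of Section 2.4 to the totally geodesic case, shows that these three contributions cancel exactly, so that $N_s$ is a local martingale. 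After a localization justified by the stochastic completeness established earlier in this section, $N_s$ is a true martingale and we obtain
\[
\langle v, \nabla P_t f(x)\rangle = \mathbb E \bigl[\langle U_t, \nabla f(\xi_t)\rangle_{\xi_t}\bigr].
\]
Cauchy--Schwarz combined with the pathwise bound $|U_t| \le e^{-Kt}|v|$ then gives
\[
|\langle v, \nabla P_t f(x)\rangle| \le e^{-Kt}|v|\, \mathbb E|\nabla f(\xi_t)| = e^{-Kt}|v|\, P_t|\nabla f|(x),
\]
and maximizing over unit $v \in T_xM$ concludes.

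The main obstacle is the Weitzenb\"ock identity making $N_s$ a local martingale: one must carefully compute the commutator of $\Delta_\Ho$ with the gradient $\nabla$ acting on sections of $TM$, and recognize that the residual curvature and torsion terms reorganize precisely into the bilinear form $\mathfrak R - \tfrac 14 J^2$ evaluated on $\nabla P_{t-s} f$. The totally geodesic hypothesis $\Tor^\nabla(\Ho, \V)=0$ is what kills the cross terms that, in the general minimal-leaf setting, would obstruct this clean cancellation and force one to settle for the weaker Lipschitz bound of the previous theorem.
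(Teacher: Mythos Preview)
Your approach is correct in principle but takes a genuinely different route from the paper. The paper does \emph{not} use a Bismut derivative formula or a Weitzenb\"ock identity at all; instead it constructs a synchronous coupling of two horizontal Brownian motions via parallel transport for the \emph{adjoint} connection $\overset{\circ}{\nabla}_X Y = \nabla_X Y + J_X Y$. The key observations are that $\overset{\circ}{\nabla}$ is metric, preserves $\Ho$, and satisfies $\overset{\circ}{\nabla}_{\dot\gamma}\dot\gamma=0$ along Riemannian geodesics. The last property forces the martingale part of $d(\xi_t,\tilde\xi_t)$ to vanish identically, and the index-form computation of Section~2.3 (with $Y_i$ chosen as $\overset{\circ}{\nabla}$-parallel fields times a scalar $\mathfrak{c}_K$) bounds the drift by $-K\,d(\xi_t,\tilde\xi_t)$. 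This gives the \emph{pathwise} contraction $d(\xi_t,\tilde\xi_t)\le e^{-Kt}d(p,q)$, and letting $q\to p$ yields the pointwise gradient bound.

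Your Bismut-formula route is a legitimate alternative, closer in spirit to \cite{MR3601645} and the transverse Weitzenb\"ock machinery of Baudoin--Kim--Wang. Its advantage is that it works at a single point and avoids the coupling-beyond-cut-locus technicalities the paper sweeps under the rug. Its cost is that the Weitzenb\"ock identity you need---the commutation of $\Delta_\Ho$ with the full gradient $\nabla$ on vector fields, with remainder exactly the (generally non-symmetric) operator associated to $\mathfrak R - \tfrac14 J^2$---is \emph{not} derivable from the index-form computations of Sections~2.3--2.4 as you suggest; those are second-variation formulas for the length functional, not operator commutators. You would have to establish that identity separately (it is true in the totally geodesic case, but requires its own calculation). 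The paper's coupling argument sidesteps this entirely by recycling the index form directly, which is why it fits more organically into the paper's development.
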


\begin{proof}
We use a  different coupling. The idea is to consider the following connection
\[
\overset{\circ}{\nabla}_X Y=\nabla_XY +J_XY.
\]
Since the foliation is totally geodesic one can check that $\overset{\circ}{\nabla}$ satisfies the following properties which will be key in the following arguments:
\begin{enumerate}
\item $\overset{\circ}{\nabla}$ is a metric connection;
\item For every $X \in \mathfrak{X}(\Ho)$ and $Y \in \mathfrak{X}(TM)$, $\overset{\circ}{\nabla}_Y X \in \mathfrak{X}(\Ho)$;
\item The geodesic equation can be written $\overset{\circ}{\nabla}_{\dot \gamma} \dot\gamma=0$, see \eqref{geodesic1}.
\end{enumerate}
For $p,q \in \mathcal{C}$ define the transport map $\overset{\circ}{\mathfrak{P}}_{p \to q}:T_pM \to T_qM$ given for $v \in T_pM$ by $\overset{\circ}{\mathfrak{P}}_{p \to q}(v)=\overset{\circ}{v}$ where $\overset{\circ}{v}$ is the $\overset{\circ}{\nabla}$-parallel transport of $v$ along the unit speed geodesic $\gamma:[0,r] \to M$ with endpoints $p,q$. As before, consider  a horizontal Brownian motion $\xi_t$ started from $p \in M$. For $q \in M$ such that $(p,q) \in \mathcal{C}$ we construct the process $\tilde{\xi}_t$ that solves the stochastic differential equation
\begin{align*}
\begin{cases}
d\tilde{\xi}_t=\overset{\circ}{\mathfrak{P}}_{p \to q} \circ d\xi_t \\
\tilde{\xi}_0=q.
\end{cases}
\end{align*}
As before, we will  ignore the genuine but inessential difficulties related to cut-locus and assume that the couple $(\xi_t,\tilde{\xi}_t)$ is defined for all $t$'s. 

By construction, the process is a diffusion on $M \times M$ with generator
\begin{align*}
\overset{\circ}{\Delta}\,^c_{\Ho,\Ho} f (p,q)=\Delta^1_{\Ho} f (p,q)+\Delta^2_{\Ho} f (p,q) +2 \sum_{i=1}^n v_i^1 \overset{\circ}{v}\,^2_i f(p,q).
\end{align*}
In It\^o's formula the local martingale part of $d(\xi_t,\tilde{\xi}_t)$ is the same as the local martingale part of
\begin{align}\label{quadratic part }
\int_0^t \left\langle \nabla^1 d(\xi_t,\tilde{\xi}_t), \circ d\xi_t \right\rangle +\left\langle \nabla^2 d(\xi_t,\tilde{\xi}_t), \circ d\tilde{\xi}_t \right\rangle
\end{align}
which vanishes since $\nabla^1 d(\xi_t,\tilde{\xi}_t) +\overset{\circ}{\mathfrak{P}}_{\tilde{\xi}_t \to \xi_t}\nabla^2 d(\xi_t,\tilde{\xi}_t)=0$ from the fact that any geodesic satisfies $\overset{\circ}{\nabla}_{\dot \gamma} \dot\gamma=0$.

We now need an estimate on $\overset{\circ}{\Delta}\,^c_{\Ho,\Ho} d (p,q)$. The argument is essentially the same as in the proof of Theorem \ref{comparison laplacian}. From the index lemma one has
\[
\overset{\circ}{\Delta}\,^c_{\Ho,\Ho} d (p,q) \le  \sum_{i=1}^n I(\gamma,Y_i)
\]
where the $Y_i$'s are arbitrary vector fields along $\gamma$ such that
\[
Y_i(0)=v_i, \, \, Y_i(r)=\overset{\circ}{v}_i.
\]
Pick
\[
Y_i(t)=\mathfrak{c}_K (t) X_i(t),
\]
where $X_i$ is the $\overset{\circ}{\nabla}$-parallel transport of $v_i$ along $\gamma$ and $\mathfrak{c}_K$ is as in the proof of Theorem \ref{comparison laplacian}. We then compute
\begin{align*}
 \sum_{i=1}^n I(\gamma,Y_i)  &=  \sum_{i=1}^n  \int_0^r  \left | \nabla_{\dot \gamma}  Y_i +\frac{1}{2} J_{\dot \gamma}  Y_i   \right|^2  -\left \langle \mathrm{Riem}^\nabla(\dot \gamma,Y_i)Y_i, \dot \gamma \right \rangle  +\left\langle (\nabla_{Y_i} \Tor^\nabla)(Y_i,\dot\gamma),\dot \gamma_\V \right\rangle\\
 &\quad -\frac{1}{4} | J_{\dot \gamma} Y_i |^2_\Ho+ |\Tor^\nabla(Y_i,\dot\gamma)|^2 \, dt \\
 &=\sum_{i=1}^n  \int_0^r  \mathfrak{c}'_K(t)^2  - \mathfrak{c}_K(t)^2 \left(  \left \langle \mathrm{Riem}^\nabla(\dot \gamma,X_i)X_i, \dot \gamma \right \rangle  +\left\langle (\nabla_{X_i} \Tor^\nabla)(X_i,\dot\gamma),\dot \gamma_\V \right\rangle \right. \\
 &\left. \quad + |\Tor^\nabla(X_i,\dot\gamma)|^2 \right) \, dt \\
 &=  \int_0^r n  \mathfrak{c}'_K(t)^2-\mathfrak{c}_K(t)^2\left( \mathfrak{R}(\dot \gamma,\dot \gamma) +\frac{1}{4} (J_{\dot \gamma},J_{\dot \gamma})_\Ho \right)dt \\
 &\le  \int_0^r n  \mathfrak{c}'_K(t)^2- K \mathfrak{c}_K(t)^2  dt \\
 &\le -Kr.
\end{align*}
Therefore, we have
\[
d(\xi_t,\tilde{\xi}_t) \le d(p,q)-K \int_0^t d(\xi_s,\tilde{\xi}_s) ds
\]
which yields
\[
d(\xi_t,\tilde{\xi}_t) \le  d(p,q) e^{-Kt}.
\]
Let now $f \in \mathbf{Lip}_b (M)\cap C^2(M)$. One has
\begin{align*}
|P_tf(q)-P_tf(p)|&= \left| \mathbb{E} \left( f(\tilde{\xi}_t) \right)- \mathbb{E}\left(f(\xi_t)\right) \right| \\
 &\le \mathbb{E} \left( \left| f(\tilde{\xi}_t)- f(\xi_t)\right| \right) \\
 &\le   \mathbb{E} \left(d(\xi_t,\tilde{\xi}_t)  \sup_{\rho \in B(\xi_t,d(p,q) e^{-Kt})} \left| \nabla f (\rho)\right| \right) \\
 &\le d(p,q) e^{-Kt}\mathbb{E} \left(  \sup_{\rho \in B(\xi_t,d(p,q) e^{-Kt})} \left| \nabla f (\rho)\right| \right).
\end{align*} 
Using the density of $\mathcal{C}$ in $M\times M$ and letting $q \to p$ yields
\[
|\nabla P_t f|(p)\le e^{-Kt} P_t |\nabla f| (p).
\]
\end{proof}

As an example of application of the previous result, consider the case of Carnot groups.  Consider a Carnot group $G$ of step 2 equipped with a foliation $(\mathcal{F},g)$ like in Proposition \ref{Example Carnot}. This foliation is totally geodesic. 
Consider then the canonical variation of the metric $g$ given by
\[
g_\varepsilon=g_\Ho \oplus \frac{1}{\varepsilon} g_\V, \quad \varepsilon >0.
\]
One can easily check that the foliation $(\mathcal{F},g_\varepsilon)$ is Riemannian and totally geodesic for every $\varepsilon >0$. Denote $\mathfrak{R}_\varepsilon$ the tensor $\mathfrak{R}$ of $(\mathcal{F},g_\varepsilon)$. It follows from Proposition  \ref{example carnot r} that for $U,V \in \mathfrak{X}(TM)$
\begin{align*}
\mathfrak{R}_\varepsilon(U,V)= \sum_{i=1}^n \frac{1}{4\varepsilon^2} \langle J_{U} X_i,J_V X_i\rangle_\Ho-\frac{1}{\varepsilon} \langle \Tor^\nabla(X_i,U),\Tor^\nabla(X_i,V)\rangle_\V.
\end{align*}
Therefore there exists a constant $K <0$ such that for $U\in \mathfrak{X}(TM)$
\begin{align*}
\mathfrak{R}_\varepsilon(U)- \sum_{i=1}^n \frac{1}{4\varepsilon^2} \langle J_{U} X_i,J_V X_i\rangle_\Ho \ge-\frac{K}{\varepsilon} |U|^2_\varepsilon.
\end{align*}
Applying Theorem \ref{GB total} we recover a gradient bound first observed in \cite{MR3601645}:
\[
\sqrt{ |\nabla_\Ho P_t f|^2 +\varepsilon |\nabla_\V P_t f|^2} \le e^{Kt/\varepsilon} P_t \left( \sqrt{ |\nabla_\Ho P_t f|^2 +\varepsilon |\nabla_\V P_t f|^2 } \right).
\]
Note that choosing $\varepsilon=t$ yields the following result:
\begin{corollary}
Let $G$ be a Carnot group of step 2 and $P_t$ be the sub-Riemannian semigroup. There exists a constant $C>1$ such that for every $f \in \mathbf{Lip}_b (M) \cap C^2(M)$
\[
 |\nabla_\Ho P_t f| +t  |\nabla_\V P_t f| \le C \left[P_t  \left(  |\nabla_\Ho f|\right) +t  P_t  \left( |\nabla_\V f| \right)\right]
\]
\end{corollary}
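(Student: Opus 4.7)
The plan is to combine the canonical-variation gradient bound
\[
\sqrt{|\nabla_\Ho P_t f|^2 + \varepsilon|\nabla_\V P_t f|^2} \le e^{Kt/\varepsilon}\, P_t\!\left(\sqrt{|\nabla_\Ho f|^2 + \varepsilon|\nabla_\V f|^2}\right),
\]
which was derived in the display immediately preceding the corollary as a consequence of Theorem \ref{GB total} applied to $(\mathcal{F}, g_\varepsilon)$, with a judicious choice of the free parameter $\varepsilon$ depending on $t$. The $\varepsilon$ must be calibrated so that the powers of $t$ on the two sides of the desired inequality line up, and so that the exponential prefactor $e^{Kt/\varepsilon}$ stays uniformly bounded for all $t>0$.

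The concrete choice is $\varepsilon = t^2$. With this substitution the exponent becomes $Kt/\varepsilon = K/t$, and since $K\le 0$ one has $e^{K/t}\le 1$ uniformly in $t>0$, so the prefactor may be dropped at the cost of an absolute constant. The gradient bound then reads
\[
\sqrt{|\nabla_\Ho P_t f|^2 + t^2|\nabla_\V P_t f|^2} \;\le\; P_t\!\left(\sqrt{|\nabla_\Ho f|^2 + t^2|\nabla_\V f|^2}\right).
\]

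Next I would invoke the elementary two-term equivalence $\tfrac{1}{\sqrt 2}(a+b)\le\sqrt{a^2+b^2}\le a+b$ for $a,b\ge 0$. On the left-hand side, with $a=|\nabla_\Ho P_t f|$ and $b=t|\nabla_\V P_t f|$, this yields the lower bound $\tfrac{1}{\sqrt 2}\bigl(|\nabla_\Ho P_t f|+t|\nabla_\V P_t f|\bigr)$. Applied pointwise to the integrand on the right-hand side with $a=|\nabla_\Ho f|$ and $b=t|\nabla_\V f|$, it yields $\sqrt{|\nabla_\Ho f|^2+t^2|\nabla_\V f|^2}\le |\nabla_\Ho f|+t|\nabla_\V f|$; linearity and positivity of $P_t$ then convert this into $P_t(|\nabla_\Ho f|)+t\,P_t(|\nabla_\V f|)$. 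Combining the two inequalities gives
\[
|\nabla_\Ho P_t f|+t|\nabla_\V P_t f| \;\le\; \sqrt 2\,\bigl[P_t(|\nabla_\Ho f|)+t\,P_t(|\nabla_\V f|)\bigr],
\]
which is precisely the claim with $C=\sqrt 2$.

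There is essentially no obstacle here: once the one-parameter gradient bound is in hand, the argument is pure bookkeeping. The only delicate point is matching the scaling between $\varepsilon$ and $t$, which forces the choice $\varepsilon=t^2$ rather than $\varepsilon=t$, and verifying that the sign $K\le 0$ (guaranteed for any Carnot group by the computation following Proposition \ref{example carnot r}) renders $e^{K/t}$ harmlessly bounded above by $1$ on the entire half-line $t>0$.
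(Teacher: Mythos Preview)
Your overall strategy—specialize the one-parameter bound and then pass from the $\ell^2$ to the $\ell^1$ combination via $\tfrac{1}{\sqrt 2}(a+b)\le\sqrt{a^2+b^2}\le a+b$—is exactly what the paper has in mind. The point of divergence is the choice of the parameter: the paper takes $\varepsilon=t$, whereas you take $\varepsilon=t^{2}$. With $\varepsilon=t$ the prefactor is the fixed constant $e^{K}$ and the weight on the vertical gradient is $\sqrt{\varepsilon}=\sqrt{t}$; with your $\varepsilon=t^{2}$ the weight is indeed $t$ as printed in the statement, but the prefactor becomes $e^{K/t}$.

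Your control of that prefactor rests entirely on the sign $K\le 0$, and this is where the argument breaks. Although the paper writes ``$K<0$'', that sign is a slip: on any nonabelian step-$2$ Carnot group one computes, for a horizontal unit vector $X$, that $\mathfrak R_\varepsilon(X,X)-\tfrac14(J^\varepsilon_X,J^\varepsilon_X)_\Ho=-(\Tor^\nabla_X,\Tor^\nabla_X)_{\Ho,\varepsilon}=-c/\varepsilon$ with $c>0$ (e.g.\ $c=1$ on the Heisenberg group), which forces $K>0$ in the bound $\ge -K/\varepsilon\,|U|^2_\varepsilon$. Equivalently, the gradient estimate on a nonabelian Carnot group cannot be contractive, so the exponential factor must exceed $1$. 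With the correct sign $K>0$, your factor $e^{K/t}$ blows up as $t\downarrow 0$ and the argument fails for small time. The paper's choice $\varepsilon=t$ is the robust one: it keeps the prefactor equal to the absolute constant $e^{K}$ for all $t>0$ and yields the inequality with $\sqrt t$ in place of $t$, which is almost certainly what was intended in the displayed corollary.
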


\bibliographystyle{plain}
\bibliography{biblio}
\end{document}